\newcommand{\R}{\mathbb{R}}
\newcommand{\om}{\omega}
\newcommand{\Om}{\Omega}
\newcommand{\trans}{\intercal}
\DeclareMathOperator{\gr}{gr}
\DeclareMathOperator{\PD}{PD}
\DeclareMathOperator{\id}{id}
\DeclareMathOperator{\supp}{supp}
\DeclareMathOperator{\dist}{dist}
\DeclareMathOperator{\SO}{SO}
\DeclareMathOperator{\Diff}{Diff}
\newcommand{\CP}{\mathbb{C}P}
\newtheorem{theorem}{Theorem}[section]
\newtheorem{prop}[theorem]{Proposition}
\newtheorem{lemma}[theorem]{Lemma}
\newtheorem{cor}[theorem]{Corollary}
\newtheorem{remark}[theorem]{Remark}
\numberwithin{equation}{section}
\numberwithin{figure}{section}
\newcommand{\eoesymbol}{$\between$}
\DeclareRobustCommand{\eoe}{%
  \ifmmode \mathqed
  \else
    \leavevmode\unskip\penalty9999 \hbox{}\nobreak\hfill
    \quad\hbox{\eoesymbol}%
  \fi
}
\begin{document}


\title[$J$-tame inflation]{J-tamed inflation via tame to compatible deformations}

\author[P. Chakravarthy]{Pranav Chakravarthy}
\address{PC: Department of Mathematics\\Universit\'e Libre de Bruxelles \\ Brussels, Belgium}
\email{pranav.vijay.chakravarthy@ulb.be}

\author[J. Payette]{Jordan Payette}
\address{JP: Department of Mathematics and Statistics, McGill University \\ Montréal, Québec, Canada}
\email{jordan.payette@mail.mcgill.ca}

\author[M. Pinsonnault]{Martin Pinsonnault}
\address{MP: Department of Mathematics \\ University of Western Ontario \\ London, Ontario, Canada}
\email{mpinson@uwo.ca}

\begin{abstract}
We give a complete and self-contained exposition of the $J$-tame inflation lemma: Given any tame almost complex structure $J$ on a symplectic $4$-manifold $(M,\omega)$, and given any compact, embedded, $J$-holomorphic submanifold $Z$, it is always possible to construct a deformation of symplectic forms $\omega_t$ in classes $[\omega_t]=[\omega]+t\mathrm{PD}{Z}$, for $0\leq t$ less than an upper bound $0<T$ that only depends on the self-intersection $Z\cdot Z$. The original proofs of this fact make the unwarranted assumption that one can find a family of normal planes along $Z$ that is both $J$ invariant and $\omega$-orthogonal to $TZ$ --- which amounts, in effect, to assuming the compatibility of $J$ and $\omega$ along $Z$. We explain how the original constructions can be adapted to avoid this assumption when $Z$ has nonpositive self-intersection, and we discuss the difficulties with this line of argument in general to establish the full inflation when $Z$ has positive self-intersection. We overcome this problem by proving a `preparation lemma', which states that prior to inflation, one can isotope $\omega$ within its cohomology class to a new form that still tames $J$ and which is compatible with $J$ along the submanifold $Z$. This preparation lemma can be regarded as an infinitesimal version of the "tamed-to-compatible" conjecture of S. K. Donaldson along an almost-complex submanifold $Z$.
\end{abstract}

\subjclass[2020]{Primary 53D35; Secondary 57R17, 53C15, 57R52.}
\keywords{Symplectic topology; manifolds of dimension $4$; almost complex structures.}
\thanks{PC is thankful to Universit\'e Libre de Bruxelles where this project was completed, supported by the FWO and the FNRS via EOS project 40007524. PC is also grateful to the Hebrew University of Jerusalem for a postdoctoral fellowship supported by the grants of ISF (grant no. 2445/20) and the BSF (grant no. 2020310) where part of the work was undertaken. JP was supported by a FRQNT postdoctoral scholarship. MP is supported by NSERC Discovery Grant RGPIN-2020-06428. MP would like to thank the CRM where part of this work was completed.}
\maketitle

\tableofcontents

\section{Introduction}

\subsection{The inflation lemma}
Let $(M,\om)$ be a symplectic $4$-manifold and let $Z\subset M$ be a compact, embedded, symplectic submanifold without boundary. The inflation lemma states that we can always deform the symplectic form $\om$ in the direction of $\PD[Z]$ within the symplectic cone of $M$, and that the size of the deformation only depends on the self-intersection of $Z$. This was first stated in~\cite{La-Isotopy} in the case $Z\cdot Z\geq 0$. Since then, different versions of the inflation lemma appeared in the litterature, differing mainly in the compatibility condition one imposes between the deformed symplectic forms and an auxiliary almost complex structure $J$ for which $Z$ is $J$-holomorphic. In this paper, we are mainly concerned with the so called $J$-tame, or "tame-to-tame", inflation lemma, namely,
\begin{lemma}[Tame-to-tame inflation~\cite{McDuff-J-inflation, Buse-Neg-inflation}]\label{Lemma:Jtameinflation}
Let $(M,\om)$ be a symplectic $4$-manifold, $J$ a tame almost complex structure, and let $Z\subset M$ be a compact, embedded, $J$-holomorphic curve without boundary. There exist symplectic forms taming $J$ in class $[\om]+t\PD[Z]$, for $t\in[0,T)$, where
\[T=
\begin{cases}
\infty& \text{~if~} Z\cdot Z\geq 0\\
-\frac{\om(Z)}{Z\cdot Z}& \text{~if~} Z\cdot Z < 0.
\end{cases}
\]
\end{lemma}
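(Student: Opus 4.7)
The plan is to follow the two-step strategy outlined in the abstract. First, invoke the preparation lemma to isotope $\om$ within its cohomology class to a symplectic form $\om'$ that still tames $J$ and is additionally $\om'$-compatible with $J$ along $Z$. This reduction closes the gap in the classical arguments because, along $Z$, the $J$-invariant normal bundle $N$ then coincides with the $\om'$-symplectic complement of $TZ$, so a single splitting $TM|_Z = TZ \oplus N$ serves both compatibility requirements. Second, in this favourable setting, carry out a Thurston--Weinstein-style construction of an inflating closed $2$-form supported in a tubular neighbourhood of $Z$, and set $\om_t = \om' + t\eta$.

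For the explicit construction, I would identify a tubular neighbourhood $\Nn$ of $Z$ with a neighbourhood of the zero section of the complex line bundle $(N, J|_N)$ equipped with a Hermitian metric. Choose a Hermitian connection on $N$ whose connection $1$-form $\alpha$ on the unit circle bundle has curvature representing (a multiple of) $\PD[Z]$, together with a smooth cutoff $\rho(r)$ of the radial coordinate with $\rho(0) = 1$ and $\rho \equiv 0$ for $r$ large. Then
\[
\eta = -d\bigl(\rho(r)\,\alpha\bigr) = -\rho'(r)\, dr \wedge \alpha + \rho(r)\, \pi^{*}\Om_{N}
\]
is a closed, compactly supported $2$-form on $\Nn$ that represents $\PD[Z]$ and restricts to a positive area form on every normal fibre.

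Next, I would verify the taming condition $\om_t(v, Jv) > 0$ for every nonzero $v$. Outside $\Nn$, this is immediate since $\eta$ vanishes and $\om'$ tames $J$. Inside $\Nn$, decompose $v = v^{T} + v^{N}$ relative to an extension of the splitting along $Z$. The compatibility of $\om'$ and $J$ along $Z$ ensures that mixed terms $\om'(v^T, J v^N) + \om'(v^N, J v^T)$ vanish on $Z$ and are $O(r)$ nearby, while $\eta(v, Jv)$ is bounded below on the normal component and controlled on the tangential component via the explicit expression for $\eta$ in polar coordinates. When $Z \cdot Z \geq 0$, a suitable choice of connection form renders $\eta$ pointwise $J$-nonnegative, yielding $T = \infty$. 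When $Z \cdot Z < 0$, the only obstruction comes from the restriction to $Z$: one has $\int_Z \om_t = \om(Z) + t(Z \cdot Z)$, and positivity on $J$-planes tangent to $Z$ forces $t < \om(Z)/|Z \cdot Z|$.

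The main obstacle is the preparation lemma itself: constructing $\om'$ in the class $[\om]$ that tames $J$ globally and is $J$-compatible on $TM|_Z$. The natural approach is to write $\om' = \om + d\beta$ for some primitive of a closed perturbation supported near $Z$, chosen so that the needed $J$-compatible correction to $\om|_Z$ is absorbed along the submanifold. Since compatibility is a pointwise convex condition and $Z$ has codimension $2$, a local construction of a compatible pointwise model along $Z$ is straightforward; the delicate quantitative issue is that $\beta$ must be small enough in $C^{1}$ that $\om + d\beta$ still tames $J$ pointwise everywhere, which requires careful interpolation between $\om$ and the compatible model, together with a Moser-type argument to remain in the symplectic cone throughout.
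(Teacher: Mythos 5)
Your two-step outline (preparation lemma, then inflation near a compatible $J$-invariant splitting along $Z$) matches the paper's route for $Z\cdot Z>0$ and is a valid uniform strategy; the paper treats $Z\cdot Z\le 0$ by a direct Thom-form argument that needs no preparation step, but using the preparation lemma throughout is not wrong. The genuine gaps are elsewhere. First, the claim that for $Z\cdot Z\ge 0$ one can choose the connection so that ``$\eta$ is pointwise $J$-nonnegative'' is false away from $Z$: the preparation lemma makes $J$ preserve the horizontal/vertical splitting only \emph{at} points of $Z$, and off $Z$ the vertical component of $Jv$ picks up a contribution from the horizontal part of $v$, so $(dr\wedge\alpha)(v,Jv)$ can be negative. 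What actually works is that these cross terms are $O(r)$ and get dominated by $\omega'$ near $Z$ \emph{provided} the cutoff is chosen with a logarithmic profile so that $r\sqrt{f}$ stays small; this quantitative estimate is the whole content of the paper's Section~3, it is missing from your argument, and it is precisely what upgrades compatibility along $Z$ to $T=\infty$ when $Z\cdot Z\ge 0$ and to the sharp bound $\omega(Z)/|Z\cdot Z|$ when $Z\cdot Z<0$ (your argument establishes only necessity of the latter bound, not sufficiency).

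Second, the preparation lemma itself --- which you correctly identify as the main obstacle --- is not proved, and the proposed mechanism ($\omega'=\omega+d\beta$, chosen to absorb the compatible model along $Z$) differs from the paper's and does not obviously close. The paper instead builds a diffeomorphism $\psi$ fixing $Z$ pointwise, isotopic to the identity, with $\psi^*J$ both $\omega$-compatible along $Z$ and globally $\omega$-tame, and takes $\omega'=(\psi^{-1})^*\omega$. Getting this requires: a canonical linear isotopy $\Psi_t$ of $T_ZM$ from $J|_Z$ to a compatible structure; a quantitative $C^1$-stability estimate for a tameness functional under small diffeomorphisms; a Whitney-type extension theorem producing ambient diffeomorphisms with controlled $C^1$-norm and support; and an iteration over a fine time-partition with nested shrinking tubular supports, so that the degree of tameness improves at each step before it can be lost. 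Because a single extension of the full correction to a compatible $J$ along $Z$ cannot be guaranteed to preserve global tameness, this iterative shrinking is essential, and it has no counterpart in your sketch. As written, the proposal reduces Lemma~\ref{Lemma:Jtameinflation} to an unproved statement and replaces the decisive estimates with heuristics that do not hold pointwise.
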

The proofs of the tame-to-tame inflation lemma given in~\cite{McDuff-J-inflation} and~\cite{Buse-Neg-inflation} both assume that the symplectic normal bundle $\nu_{Z}$ along the submanifold $Z$ is $J$-invariant. But this is the case if, and only if, $J$ is compatible with $\om$ on $T_{Z}M$. In this situation, $\nu_{Z}$ coincides with the Riemannian normal bundle defined by the associated metric $g_{J}(x,y):=\om(x,Jy)$. However, even if we start from an $\om$ compatible almost complex structure $J$, the inflated form in class $[\om]+t \PD[Z]$ contructed in~\cite{McDuff-J-inflation, Buse-Neg-inflation} only tames $J$. We call this weaker version of inflation "compatible-to-tame". Whilst this statement suffices for applications such as McDuff's "deformation to isotopy lemma"~\cite{McDuff-DeformationToIsotopy}, the compatible-to-tame inflation is insufficient for showing the stability of homotopy type of symplectomorphism groups as done in \cite{McDuff-J-inflation, P08i} and in many other papers that use similar inflation arguments. Nevertheless, as explained in~\cite{ALLP}, the proofs in the above papers can be salvaged by relying on a weaker version of the tame inflation lemma based on Li-Zhang’s comparison of J-symplectic cones~\cite{LZ09} and which holds for manifolds with $b_2^+=1$.
\begin{lemma}[Weak $b_2^+=1$ $J$-compatible inflation~\cite{ALLP}]\label{lemma:Weak inflation}
Let $M$ be a symplectic $4$-manifold with $b_2^+=1$. Given a compatible pair $(J,\om)$ and a $J$-holomorphic embedded curve $Z$, there exists a symplectic form $\om'$ compatible with $J$ such that $[\om']= [\om]+ t \PD(Z)$, $t\in [0,T)$ where $T= \infty$ if $Z\cdot Z\geq 0$ and $T= \frac{\om(Z)}{-Z\cdot Z}$ if $Z\cdot Z<0$.
\end{lemma}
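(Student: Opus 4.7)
The plan is to bootstrap from the tame-to-tame inflation lemma (Lemma~\ref{Lemma:Jtameinflation}) by invoking the $b_2^+=1$ case of Li--Zhang's cone comparison~\cite{LZ09} as a cohomological upgrade; no new analytic construction should be needed.

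First I would apply Lemma~\ref{Lemma:Jtameinflation} directly to the data $(J,\om,Z)$: since the given $\om$ is compatible with $J$ it a fortiori tames $J$, so the tame-to-tame inflation produces, for every $t\in[0,T)$, a symplectic form $\tilde\om_t$ that tames $J$ and represents the class $[\om]+t\,\PD(Z)$, with exactly the threshold $T$ stated in the weak lemma.

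Next I would invoke Li--Zhang's theorem that on a closed $4$-manifold with $b_2^+=1$ the $J$-tame and $J$-compatible symplectic cones $\mathcal{K}_t^J,\mathcal{K}_c^J\subset H^2(M;\R)$ coincide for \emph{any} almost complex structure $J$. The class $[\tilde\om_t]=[\om]+t\,\PD(Z)$ is represented by a $J$-taming symplectic form, hence lies in $\mathcal{K}_t^J=\mathcal{K}_c^J$, and is therefore also represented by some $J$-compatible symplectic form $\om'$. That $\om'$ is exactly the form demanded by Lemma~\ref{lemma:Weak inflation}.

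The main obstacle is not really in this two-step assembly but in Lemma~\ref{Lemma:Jtameinflation} itself, which will house all of the paper's technical work (in particular the ``preparation lemma'' fix for the $Z\cdot Z>0$ case mentioned in the abstract). For the present weak form one only needs to check that the hypotheses of Lemma~\ref{Lemma:Jtameinflation} and of Li--Zhang's theorem are satisfied and that the threshold $T$ is preserved, which is automatic since the Li--Zhang upgrade is purely cohomological and does not alter the class. A minor caveat is that one might hope to produce a \emph{continuous} family $t\mapsto\om'_t$ of $J$-compatible forms, whereas Li--Zhang's cone equality is pointwise; but since the statement only asserts existence of $\om'$ in each individual class $[\om]+t\,\PD(Z)$, this loss of continuity is harmless.
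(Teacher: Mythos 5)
Your two-step argument is exactly the approach indicated in the paper's introduction and attributed to~\cite{ALLP}: inflate to a taming form, then use Li--Zhang's $b_2^+=1$ cone coincidence~\cite{LZ09} to upgrade the class to one represented by a $J$-compatible form. Note, however, that the paper \emph{states} Lemma~\ref{lemma:Weak inflation} without proof, citing~\cite{ALLP}, so there is no in-paper proof to compare against line by line; the comparison can only be against the strategy the introduction describes. One logical point worth flagging: your step 1 cites Lemma~\ref{Lemma:Jtameinflation} (full tame-to-tame inflation), but the whole raison d'\^etre of Lemma~\ref{lemma:Weak inflation} in~\cite{ALLP} was to avoid relying on that lemma, whose original proofs in~\cite{McDuff-J-inflation,Buse-Neg-inflation} had the gap this paper repairs. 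Since you start from a globally \emph{compatible} pair $(J,\omega)$, the gap never arises --- the normal bundle is automatically $J$-invariant along $Z$ --- so what you actually need is only the ``compatible-to-tame'' version of inflation, which the original proofs do establish. Citing that weaker (and historically correct) statement instead of Lemma~\ref{Lemma:Jtameinflation} avoids the inversion in which the weak lemma is made to depend on the strong one it was meant to replace. A second small point: the Li--Zhang cone equality $\mathcal{K}_t^J=\mathcal{K}_c^J$ on a closed $4$-manifold with $b_2^+=1$ is proved under the hypothesis $\mathcal{K}_c^J\neq\emptyset$; this is supplied here precisely because the pair $(J,\omega)$ is compatible, so you should record it. With these two adjustments, your argument is the intended one.
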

Although this version currently suffices for most current applications, it relies on global results about the symplectic cone, a situation that considerably limits the applicability of inflation arguments. Therefore, it is highly desirable to prove the tame-to-tame inflation lemma using only local arguments that hold for all $4$-manifolds.

As explained below, it is easy to fix the original proofs of the tame-to-tame inflation lemma in the cases $Z\cdot Z=0$ and $Z\cdot Z<0$. The proofs consist in constructing a suitable representative of the Thom class of $Z$. These direct arguments have the advantage of being adaptable to more general situations such as normal crossing divisors with $\om$-orthogonal crossings or higher dimensional hypersurfaces. This is why we provide a complete exposition. Unfortunately, in the case of curves of strictly positive self-intersection $Z\cdot Z=m>0$, the original approach imposes an upper bound on the inflation parameter and only produces symplectic forms in classes
\[
[\om]+t \PD[Z] \text{~for~} t<T\sim C(J)\cdot\frac{1}{m}
\]
where $C(J)$ is a constant that approaches $0$ as $J$ moves further away from being compatible with $\om$ along $Z$. This is why, in the case $Z\cdot Z>0$, we take another route and show that the tame-to-tame inflation process can be performed in two steps. First, we isotope the symplectic form $\om$ near the embedded $J$-holomorphic curve $Z$ so that $J$ is compatible with the new form along $Z$. Then, we apply the compatible-to-tame inflation process. The main result of this paper is thus the following "preparation lemma" that allows us to perform the first step.
\begin{lemma}[Preparation lemma]\label{PreparationLemma}
Let $J$ be an almost complex structure tamed by a symplectic form $\om$ on a $4$-manifold $M$. Given an embedded $J$-holomorphic curve $Z$, we can isotope $\om$ (within its cohomology class) to a new form $\om'$ taming $J$ and such that $J$ and $\om'$ are compatible along~$Z$.
\end{lemma}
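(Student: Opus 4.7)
The plan is to exploit the $J$-decomposition $\eta = \eta^+ + \eta^-$ of any $2$-form into its $J$-invariant (type $(1,1)$) and $J$-anti-invariant (type $(2,0)+(0,2)$) parts. Since $\eta^-(v,Jv)\equiv 0$, the condition that $\eta$ tames $J$ is equivalent to positivity of $\eta^+$ alone. So the problem reduces to producing a closed $2$-form $\omega'\in[\omega]$ whose $(1,1)$-part remains positive and whose $(2,0)+(0,2)$-part vanishes along $Z$; the desired isotopy is then the convex combination $\omega_t=(1-t)\omega+t\omega'$, which preserves the cohomology class and stays tame by convexity of the taming condition. A crucial vanishing: since $T_zZ$ is a $J$-invariant complex line (every $2$-form on a complex line being automatically $(1,1)$-type), we have $\iota^*\omega^-=0$, so there is no pullback obstruction to extending $-\omega^-|_Z$ to an exact form on $M$.

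First I would fix a Hermitian metric on $TM|_Z$ viewed as a complex rank-$2$ bundle, which yields a $J$-invariant orthogonal complement $NZ$ of $TZ$ and a Hermitian norm $|y|^2$ on the fibers; the exponential map identifies a tubular neighborhood of $Z$ with a disc bundle in $NZ$. In adapted coordinates $(z,y)$, the form $\omega^-|_Z$ lives only in the mixed $T^*Z\wedge N^*Z$ component (the $T^*Z\wedge T^*Z$ and $N^*Z\wedge N^*Z$ pieces automatically vanish as $2$-forms on complex lines). A direct local calculation writing $\tilde\alpha=\sum_k y^k(a_{ki}(z)\,dz^i+b_{kj}(z)\,dy^j)$ and choosing the coefficients to match the prescribed first normal derivatives produces a $1$-form $\tilde\alpha$ on the tubular neighborhood with $\tilde\alpha|_Z=0$ and $d\tilde\alpha|_Z=-\omega^-|_Z$; I patch such local $\tilde\alpha$'s via a partition of unity on $Z$, multiply by a radial bump $\chi(|y|^2/\epsilon^2)$, and extend by zero to obtain a compactly supported $\alpha$ on $M$. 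Setting $\omega':=\omega+d\alpha$ gives $\omega'|_Z=\omega|_Z+d\alpha|_Z=\omega^+|_Z$, which is $J$-invariant, and $[\omega']=[\omega]$ since $d\alpha$ is exact.

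It then remains to verify tameness of $\omega'$, i.e.\ positivity of $(\omega')^+=\omega^++(d\alpha)^+$. At $Z$, $(d\alpha)^+|_Z=0$ (as $d\alpha|_Z=-\omega^-|_Z$ is anti-invariant), so positivity holds on $Z$ and persists in a neighborhood by continuity and compactness of $Z$, provided the sup norm of $(d\alpha)^+$ is small. Writing $d\alpha=d\chi\wedge\tilde\alpha+\chi\,d\tilde\alpha$, the term $\chi(d\tilde\alpha)^+$ is of order $O(\epsilon)$ in the $\epsilon$-tubular neighborhood since $(d\tilde\alpha)^+$ vanishes on $Z$. The hard part will be controlling the wedge $d\chi\wedge\tilde\alpha$: with $|d\chi|=O(1/\epsilon)$ and $|\tilde\alpha|=O(\epsilon)$, its naive operator norm in the transition annulus is $O(1)$, which alone would spoil tameness (in particular the bound cannot be improved by merely shrinking $\epsilon$). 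My expectation is that one must exploit both the radial character of the Hermitian bump $\chi=\chi(|y|^2)$ and the freedom to add higher-order (in $y$) corrections to $\tilde\alpha$---which do not disturb the prescribed $1$-jet $d\tilde\alpha|_Z$---to produce further cancellation bringing the $(1,1)$-component of $d\chi\wedge\tilde\alpha$ down to $O(\epsilon)$. Once this cancellation is in hand, choosing $\epsilon$ small enough to place the full $(1,1)$-perturbation below the positivity slack of $\omega^+$ (uniform on the compact $Z$, hence on some tubular neighborhood) completes the proof.
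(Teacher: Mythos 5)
Your approach is genuinely different from the paper's and, if it can be completed, would be more economical. The paper takes the ``diffeomorphism'' route: it builds a fiberwise linear isotopy $\Psi_t$ of $T_ZM$ making $\Psi_{1/2}^*J$ compatible with $\omega$, then uses a controlled Whitney extension theorem plus a quantitative $C^1$-stability lemma to extend a fine time-discretization of $\Psi_t$ step by step to ambient diffeomorphisms supported in shrinking neighborhoods of $Z$, preserving tameness at each step because each step is $C^1$-small; the sought form is $\omega' = (\psi^{-1})^*\omega$. You instead work cohomologically, killing the $J$-anti-invariant part $\omega^-$ along $Z$ by an exact form $d\alpha$. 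Your preliminary observations are all correct: taming depends only on $\eta^+$; $\omega^-|_{TZ}=0$ and, once you pass to a $J$-invariant complement $NZ$, $\omega^-|_Z$ lives entirely in $T^*Z\wedge N^*Z$; the $1$-jet datum $(\tilde\alpha|_Z,d\tilde\alpha|_Z)=(0,-\omega^-|_Z)$ is holonomic and hence realizable; $\omega':=\omega+d\alpha$ has the right class and restricts to the $J$-compatible form $\omega^+|_Z$ along $Z$; and convexity of taming reduces the problem entirely to the tameness of $\omega'$.

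However, the gap you flag in the last paragraph --- controlling the $(1,1)$-part of $d\chi\wedge\tilde\alpha$ in the transition annulus --- is genuine, and your proposed fix does not close it. With the dyadic cutoff $\chi=\chi(|y|^2/\epsilon^2)$, a direct computation (write $\tilde\alpha = A(z)\,w\,dz + \overline{A(z)}\,\bar w\,d\bar z$ in a local unitary frame, so that $d\tilde\alpha|_Z=-\omega^-|_Z$ is pure $(2,0)+(0,2)$) gives the $(1,1)$-part of $d\chi\wedge\tilde\alpha$ proportional to $\chi'\,\epsilon^{-2}\bigl(A\,w^2\,d\bar w\wedge dz + \overline{A}\,\bar w^2\,dw\wedge d\bar z\bigr)$, whose sup norm is bounded below, uniformly in $\epsilon$, by a quantity comparable to $\|\omega^-|_Z\|$. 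Exploiting ``the radial character of the bump'' does not help, since $d\chi$ being radial is already built into this computation. Adding an $O(|y|^2)$ correction $\tilde\alpha'$ does not help either: the bad term $\chi'\epsilon^{-2}Aw^2\,d\bar w\wedge dz$ can only be matched by the $wP\,d\bar w\wedge dz$ contribution from $d\chi\wedge\tilde\alpha'$ (with $\tilde\alpha'=P\,dz+\cdots$), forcing $P=-Aw$, i.e.\ forcing you to subtract $\tilde\alpha$ itself and destroy the prescribed $1$-jet; similar degree-counting rules out the other monomials. The obstruction is therefore not $O(\epsilon)$ but genuinely $O(\|\omega^-|_Z\|)$, and this is precisely the phenomenon that the paper observes in its Section 3.3 discussion of positive inflation, where the admissible inflation parameter degenerates as the skew norm $N=\|B\|$ approaches $2$.

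The good news is that the gap can be closed without abandoning your route, by changing the shape of the cutoff rather than $\tilde\alpha$: replace $\chi(|y|^2/\epsilon^2)$ by a two-parameter \emph{logarithmic} cutoff $\chi(r)$ that equals $1$ for $r<\epsilon_1$, vanishes for $r>\epsilon_2$, and is an affine function of $\log r$ in between, so that $|\chi'(r)|\lesssim \bigl(r\log(\epsilon_2/\epsilon_1)\bigr)^{-1}$. Then the same computation gives $\|(d\chi\wedge\tilde\alpha)^{(1,1)}\|\lesssim \|\omega^-|_Z\|/\log(\epsilon_2/\epsilon_1)$, which tends to zero as $\epsilon_1/\epsilon_2\to 0$, while $\|(\chi\,d\tilde\alpha)^{(1,1)}\|=O(\epsilon_2)$ is controlled by taking $\epsilon_2$ small. (This is exactly the trick the paper uses in Section 3.2 to prove tame-to-tame inflation in the negative self-intersection case.) Choosing $\epsilon_2$ small and then $\epsilon_1\ll\epsilon_2$ makes $\|(d\alpha)^+\|_{C^0}$ smaller than the uniform positivity margin of $\omega^+$ on a compact neighborhood of $Z$, and your argument then closes. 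Alternatively, you could keep your dyadic cutoff but iterate the construction, reducing $\|\omega^-|_Z\|$ by a small fixed factor at each step while preserving tameness by openness; this mirrors the iterative structure of the paper's proof, but stays in the world of exact $2$-forms and avoids the paper's Whitney extension and $C^1$-stability machinery entirely.
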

This approach, which only relies on local considerations, establishes the tame-to-tame inflation process in full generality and has the advantage of working along embedded $J$-holomorphic curves of arbitrary self-intersections. As an immediate corollary of the tame-to-tame inflation, we obtain an equivariant version of inflation, namely,

\begin{cor}[Equivariant $J$-inflation lemma]
Suppose $(M,\om)$ is equipped with a symplectic group action of a compact group $G$. Let $Z$ be a $G$-invariant embedded symplectic submanifold of $(M,\om)$ such that $Z$ is holomorphic with respect to an invariant $\om$-tame almost complex structure $J$. Then there exists a family of invariant symplectic forms taming $J$ in class $[\omega]+t\PD[Z]$, for $\lambda \in[0, T)$, where
\[
T= \begin{cases}\infty & \text { if } Z \cdot Z \geq 0 \\
-\frac{\omega(Z)}{Z \cdot Z} & \text { if } Z \cdot Z<0\end{cases}.
\]
\end{cor}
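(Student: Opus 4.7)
The plan is to reduce the equivariant statement to the non-equivariant tame-to-tame inflation (Lemma~\ref{Lemma:Jtameinflation}) by averaging over $G$ with respect to its (normalised) Haar measure $d\mu$. Since $G$ is compact, this averaging operation is finite, linear and continuous, commutes with $d$, and preserves any convex open condition fibrewise.

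Concretely, I would first apply Lemma~\ref{Lemma:Jtameinflation} to the data $(M,\om,J,Z)$ to obtain a smooth family $\{\om_t\}_{t\in[0,T)}$ of (a priori non-invariant) symplectic forms taming $J$ with $[\om_t]=[\om]+t\PD[Z]$. I then set
\[
\bar\om_t \;:=\; \int_G g^*\om_t \, d\mu(g).
\]
Four properties need to be checked. (i) $G$-invariance: immediate from the translation-invariance of $d\mu$. (ii) Closedness: $d$ commutes with pullback and with the integral, so $d\bar\om_t = 0$. (iii) Correct cohomology class: for each $g\in G$, we have $g^*[\om_t]=g^*[\om]+t\,g^*\PD[Z]=[\om]+t\PD[Z]$, using that $\om$ is $G$-invariant and that $Z$ is a $G$-invariant submanifold so $\PD[Z]$ is fixed by $g^*$; hence each $g^*\om_t$ lies in the target class, and so does the average. (iv) Taming $J$: the hypothesis $g^*J=J$ gives, for any nonzero $v\in TM$,
\[
(g^*\om_t)(v,Jv) \;=\; \om_t\bigl(g_*v,\,g_*(Jv)\bigr) \;=\; \om_t\bigl(g_*v,\,J(g_*v)\bigr) \;>\; 0,
\]
so each $g^*\om_t$ tames $J$; since the taming cone is convex and open, the integral $\bar\om_t$ also tames $J$, and in particular is nondegenerate. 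Smoothness of $t\mapsto\bar\om_t$ follows because averaging is a bounded linear operation on the space of smooth $2$-forms.

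No real obstacle is expected here: the nontrivial content is entirely in Lemma~\ref{Lemma:Jtameinflation} (which itself uses the Preparation Lemma~\ref{PreparationLemma} in the case $Z\cdot Z>0$), and the corollary only requires the observation that the construction respects a compact group symmetry through averaging. The mild point worth stating carefully is that $\PD[Z]$ is genuinely $G$-invariant, which follows from the $G$-invariance of $Z$ as an oriented submanifold (the orientation on $Z$ induced by $J|_Z$ is preserved because $J$ is $G$-invariant).
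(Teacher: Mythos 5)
Your proposal is correct and takes essentially the same route as the paper: apply the non-equivariant Lemma~\ref{Lemma:Jtameinflation} and then average the resulting path of forms over $G$ using the Haar measure, verifying that the invariance of $[\om]$, $[Z]$, $J$, and the orientation of $M$ guarantee that each $g^*\om_t$ stays in the correct class and tames $J$, so the average does too by convexity. You spell out the taming and cohomology checks in slightly more detail than the paper does, but the argument is the same.
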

\begin{proof}
By Lemma~\ref{Lemma:Jtameinflation}, there is a path $\om_t$ of non-invariant symplectic forms in  cohomology classes $[\omega]+t\PD[Z]$, for $t<T$. Averaging $\om_t$ under the group action defines a new path $\tilde{\omega}_t:= \int_G \om_t dg$. Since the $G$ action preserves the classes   $[\om]$ and $[Z]$, as well as the orientation class $[\om]^2$, it also preserves $\PD[Z]$. Consequently,  $[\tilde{\om}_t] = [\om_t]$. Furthermore, as $J$ is invariant under the $G$ action, $\tilde{\om}_\lambda$ is a path of invariant symplectic forms taming $J$.
\end{proof}

\begin{remark}
Recall that Donaldson's "tame-to-compatible" conjecture~\cite{Do2006} states that on any $4$-manifold, if an almost complex structure $J$ is tamed by a symplectic form $\om$, then there exists a cohomologous symplectic form $\om'$ that is compatible with $J$. Clearly, the Preparation lemma~\ref{PreparationLemma} follows directly from Donaldson's conjecture, and it seems relevant to list $4$-manifolds for which it is known to hold. The first breakthrough came from the work of Taubes~\cite{Taubes} who showed that the conjecture holds for \emph{generic} $J$ on surfaces with $b_{2}^{+}=1$. Later, Li and Zhang~\cite{TJL-Zhang} proved that it holds for $\CP^{2}$, for rational ruled surfaces, and for rational surfaces with $b_2-1$ disjoint exceptional $J$-curves. To our knowledge, the most recent advance on this problem is the work of Tan and al.~\cite{TQWZZ} which proves Donaldson's conjecture for $4$-manifolds satisfying a specific Hodge-theoretic condition, including all $4$-manifolds with $b^{+}_{2}=1$. Combining this later result with the work of McDuff~\cite{McDuff-Rational+Ruled} showing that the only symplectic $4$-manifolds that contain embedded spheres of non-negative self-intersections are blow-ups of rational or ruled manifolds, we conclude that the $J$-tame inflation holds for any embedded sphere $Z$ with $Z\cdot Z\geq 0$. Since the tame-to-tame inflation lemma holds whenever $Z\cdot Z\leq 0$, it follows that the only cases remaining are surfaces $Z$ of genus $g(Z)\geq 1$ and of self-intersection $Z\cdot Z\geq 1$. For all these cases, the preparation lemma is still essential.\eoe
\end{remark}

\section{Preliminary considerations}\label{section:preliminary considerations}
We begin with some elementary observations that will be useful in restating the tameness and compatibility conditions of $J$ along $Z$ in terms of suitable norms. This will be used in the proof of the tame-to-tame inflation lemma in the case $Z\cdot Z\leq 0$ given in Section~\ref{section:Thom forms}, as well as in the proof of the preparation lemma that occupies the rest of the document.

\subsection{Spaces of tame and compatible pairs \texorpdfstring{$(\omega, J)$}{w}}\label{subsec:tame_comp}
Let $V$ be a $2n$-dimensional real vector space. Let $J$ be a complex structure on $V$, \emph{i.e.}, $J^2 = -1$. We say that a 2-form $\omega$ on $V$:
\begin{enumerate}
\item \emph{tames} or \emph{is adapted to} $J$ if the 2-tensor $\omega \circ (1 \otimes J)$ is positive-definite, \emph{i.e.}, if $v \in \R^{2n} \setminus \{ 0 \}$ implies $\omega( v, Jv) > 0$,

\item  that it is \emph{$J$-invariant} if $\omega \circ  (J \otimes J) = \omega$, \emph{i.e} if $\omega(J v, J w) = \omega( v,w)$ for all $v,w \in \R^{2n}$,

\item  and that it is \emph{compatible with} $J$ if it is both adapted to $J$ and $J$-invariant.
\end{enumerate}

\noindent We note that a 2-form $\omega$ that is adapted to $J$ is automatically nondegenerate (hence $\omega$ is a symplectic 2-form) and that the tameness condition is an open one. \vspace{0pt}

The sets
\[
\Om_{\tau}(J) := \{ \om~|~\omega \text{~tames~} J\} \text{~and~} \Om_c(J) := \{\om~|~ \omega \text{~is compatible with~} J\}
\]
are easily seen to be convex. We note that $\Om_c(J)$ is the fixed-point set of the involution
\[ \iota : \Om_{\tau}(J) \to \Om_{\tau}(J) : \omega \mapsto \iota(\omega) := \omega \circ (J \otimes J) \; . \]
This map is well-defined: $\iota(\om)$ is clearly a 2-form and the calculation
\[ \iota(\omega)(v, Jv) = \omega(Jv, J^2 v) = - \omega(Jv, v) = \omega(v, Jv) \]
shows that $\iota(\om)$ tames $J$ if (and only if) $\om$ does.

The map
\[ \pi : \Om_{\tau}(J) \to \Om_c(J) : \om \mapsto \frac{1}{2}(\om + \iota(\om)) \]
is a "conical fibration" in the sense that its fibers are (double) cones. Indeed, for $\om \in \Om_{\tau}(J)$ and $t \in [0,1]$, if we set $\om_t := (1-t)\om + t \iota(\om)$, then we have $\om_t \in \Om_{\tau}(J)$, $\iota(\om_t) = \om_{1-t}$ and $\pi(\om_t) =\om_{1/2}$.

As a result, we see that $\Om_{\tau}(J)$ deformation retracts onto $\Om_c(J)$. We also observe that for $\om \in \Om_{\tau}(J)$, we have $\om \in \Om_c(J)$ if and only if $\om_t \equiv \om$ for all $t \in [0,1]$.

Finally, we note that $\Om_{\tau}(J) = \Om_c(J)$ when $V$ is 2-dimensional, \emph{i.e.}, any tame pair $(\om, J)$ is in fact compatible, as is easily seen by considering a basis $\langle v, Jv \rangle$ of $V$ such that $\om(v, Jv) = 1$. \vspace{0pt}

\subsection{Symplectic splitting}\label{section:symplectic splitting}
We shall be primarily concerned with a four dimensional vector space $V$ equipped with a tame linear pair $(\om, J)$. We also suppose that a $J$-invariant 2-dimensional subspace $V_1 \subset V$ is given. (Typically, $V$ will be the tangent space $T_p M$ at $p\in Z$, and $V_1$ will be the subspace $T_pZ$.) We denote $j_1 : V_1 \subset V$ the canonical injection, $J_1 := J|_{V_1}$ the induced complex structure and $\omega_1 := j_1^* \omega$ the induced symplectic form. It is clear that $(\omega_1, J_1)$ is tame (hence compatible) on $V_1$. We also consider the symplectic orthogonal subspace $V_2 := V_1^{\omega}$ of $V_1$, and write $j_2 : V_2 \to V$ for the canonical injection and $\omega_2 = j_2^* \omega$ for the induced symplectic form. $V$ splits as a direct sum $V_1 \oplus V_2$, and we denote $\pi_k : V \to V_k$ the corresponding canonical projections. It follows that $\omega = \pi_1^*\omega_1 + \pi_2^* \omega_2$.

\subsection{Working in a symplectic basis}
It is convenient to work in a symplectic basis of $(V, \omega)$ that is compatible with the splitting $V = V_1 \oplus V_2$.  With respect to such a basis, $\omega$ and $J$ are represented by the block-matrices
\begin{align}\label{eq:matrixform}
\om = \begin{pmatrix} \omega_1 & 0 \\ 0 & \omega_2  \end{pmatrix} \text{~and~} J = \begin{pmatrix} J_1 &  B \\ 0 & J_2  \end{pmatrix} \, , \; \text{~where~} \omega_1 = \omega_2 = J_0^T = \begin{pmatrix} 0 & 1 \\ -1 & 0  \end{pmatrix} \, . 
\end{align}

\noindent The condition $J^2 = -1$ amounts to the system of equations
\begin{align}\label{eq:system} 
J_1^2 = J_2^2 = -1 \; \text{~and~} \; J_1 B + BJ_2 = 0 \, .
\end{align}
Hence the matrix $J_2$ determines a complex structure (also denoted) $J_2$ on  $V_2$. Like the complex structure $J_1$ on $V_1$, the complex structure $J_2$ on $V_2$ admits a basis-free expression, namely $J_k = \pi_k \circ J \circ j_k$ for $k=1,2$. (This is most easily proved by representing the $\pi_k$'s and the $j_k$'s as block-matrices.)

\subsection{Canonical scalar products}\label{sec:tion:canonical_metric}
In this particular setting, the tame pair $(\omega, J)$ determines \emph{two} canonical (\emph{i.e.}, basis-independent) scalar products on $V$. The first one, denoted $g_J$, is the one usually associated to such a pair, namely, the symmetrization
\[g_J(v,w) = \frac{1}{2}\Big(\om(v,Jw)+\om(w,Jv)\Big).\]
The second one, simply denoted $g$, is defined in terms of the symplectic splitting. Since the pair $(\omega, J)$ is tame, it follows that the pairs $(\omega_1, J_1)$ and $(\omega_2, J_2)$ are tame, and thus compatible, on $V_1$ and $V_2$ respectively. By compatibility, the 2-tensors $g_1 = \om_1 \circ (1 \otimes J_1)$ and  $g_2 = \om_2 \circ (1 \otimes J_2)$ are scalar products on $V_1$ and $V_2$. We define the scalar product $g = g_1 \oplus g_2$ on $V = V_1 \oplus V_2$, which is thus represented in the above basis by the matrix
\begin{align}\label{eq:matrixform2} 
g = \begin{pmatrix} J_0^T J_1 & 0 \\ 0 & J_0^T J_2  \end{pmatrix} \, . 
\end{align}
We stress that from its very definition, the scalar product $g$ on $V$ is invariantly defined, in the sense that it does not depend on the precise symplectic basis we chose in the previous paragraph. Moreover, \ref{eq:matrixform2} shows that $g$ depends only on the splitting of $V$, on $\omega$ and on the complex structures $J_1$ and $J_2$ induced by $J$. Remarkably, and that is a crucial point in the whole discussion, $g$ does not depend on the "skew" part of $J$ given by the operator $B = \pi_1 \circ J \circ j_2 : V_2 \to V_1$. \vspace{12pt}

\subsection{Working in a unitary basis}\label{subsubsec:unitary-basis}
It is convenient to work with symplectic bases of $V_1$ and $V_2$ such that $J_1 = J_2 = J_0$ as matrices. This can be done since the pairs $(\omega_1, J_1)$ and $(\omega_2, J_2)$ are compatible. The metric $g$ is then represented by the identity matrix, which shows that the choices of such symplectic bases of $V_1$ and $V_2$ are uniquely determined up to rotations in each $V_k$ separately.

\subsection{Norm of the skew part}\label{subsubsec:norm}
Interpreting the skew part $B$ of $J$ as a linear map $V_2 \to V_1$, we may consider its operator norm $N := \|B\|$ with respect to the scalar products $g_2$ and $g_1$. Working in a basis as in the previous paragraph and using \ref{eq:system}, we compute that 
\begin{equation}\label{eq:form of B}
B = \begin{pmatrix} a & b \\ b & -a \end{pmatrix} ,
\end{equation}
from which it is easily seen that $N = \sqrt{a^2+b^2}$. Although the matrix $B$ is determined only up to left and right actions of $\SO(2)$, its norm $N$ is independent of the specific basis picked: it is an intrinsic invariant of the pair $(\omega,J)$. As we shall explain below, $N$ measures how far the tame pair $(\omega, J)$ is from being compatible.

\subsection{Tameness and compatibility conditions}\label{section:tameness+compatibility conditions}
%
Given a matrix $B$ of the form \eqref{eq:form of B}, let $J_B$ be the endomorphism represented by the block matrix
\[ \begin{pmatrix} J_0 & B \\ 0 & J_0\end{pmatrix}.\]
We note that 
\[ B^T = B,~\; J_0B = -BJ_0 =\begin{pmatrix} -b & a \\ a & b  \end{pmatrix},~\; B^2 = (a^2+b^2)I,\; \text{~and that~}\; J_B^2 = -I.\]
\begin{lemma}\label{lemma:tameness condition}
The linear pair $(\om, J_B)$ is tame if and only if $N :=\|B\| < 2$, and it is compatible if and only if $N = 0$.
\end{lemma}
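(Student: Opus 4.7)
The plan is to evaluate the tameness form $v \mapsto \omega(v, J_B v)$ explicitly in the unitary basis of Section~\ref{subsubsec:unitary-basis} and then reduce positivity to a two-variable inequality. Writing $v = (v_1, v_2)^{\trans}$ with $v_k \in \R^2$, the block forms in~\eqref{eq:matrixform} together with the Kähler identity $\omega_k(w, J_0 w) = |w|^2$ available in the unitary basis give immediately
\[
\omega(v, J_B v) \;=\; |v_1|^2 + |v_2|^2 + v_1^{\trans}\bigl(J_0^{\trans} B\bigr) v_2.
\]

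The next step is to control the cross term $v_1^{\trans}(J_0^{\trans} B) v_2$ via the spectral theory of $J_0^{\trans} B$. The key observation I would establish is that $J_0^{\trans} B$ is symmetric: combining $B^{\trans} = B$ with the anticommutation $J_0 B + B J_0 = 0$ (which is the second equation of~\eqref{eq:system} specialized to $J_1 = J_2 = J_0$) yields $(J_0^{\trans} B)^{\trans} = B J_0 = -J_0 B = J_0^{\trans} B$. A direct calculation from~\eqref{eq:form of B} gives characteristic polynomial $\lambda^2 - (a^2+b^2)$, so the eigenvalues are $\pm N$ and the operator norm is $N$. Consequently, the bilinear form $(u, w) \mapsto u^{\trans}(J_0^{\trans} B) w$ on $\R^2 \times \R^2$ ranges exactly over $[-N|u|\,|w|,\, N|u|\,|w|]$, with both extremes attained along the $\pm N$-eigenvectors.

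Putting the pieces together, tameness of $(\omega, J_B)$ becomes equivalent to the scalar inequality
\[
x^2 + y^2 - Nxy > 0 \qquad \text{for every } (x, y) \in \R_{\geq 0}^2 \setminus \{(0,0)\},
\]
with $x = |v_1|$ and $y = |v_2|$. Completing the square as $\bigl(x - \tfrac{N}{2} y\bigr)^2 + \bigl(1 - \tfrac{N^2}{4}\bigr) y^2$ shows this positivity holds iff $N < 2$; conversely, if $N \geq 2$, the choice $x = y = 1$ combined with the extremal vector configuration supplied by the previous paragraph yields an explicit $v$ with $\omega(v, J_B v) = 2 - N \leq 0$, contradicting tameness. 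The whole argument is elementary linear algebra; the only delicate point is the symmetry of $J_0^{\trans} B$, which crucially uses the anticommutation enforced by $J_B^2 = -I$.
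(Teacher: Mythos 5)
Your proof is correct and follows essentially the same route as the paper: expand $\omega(v, J_B v)$ in the unitary basis to get $|v_1|^2 + |v_2|^2$ plus a cross term, bound the cross term by $N|v_1||v_2|$, and observe that positivity of the resulting scalar quadratic holds precisely when $N < 2$. The only cosmetic difference is that you obtain the sharp bound on the cross term (and its attainability) by diagonalizing the symmetric matrix $J_0^{\trans}B$ and reading off its eigenvalues $\pm N$, whereas the paper uses Cauchy--Schwarz together with the fact that $\|J_0 B\| = \|B\| = N$, and then AM--GM in place of your completed square; both give the same extremal configuration.
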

\begin{proof}
Given $(v,w) \in V_1 \oplus V_2$ of norm $1$, we have
\begin{align}\label{eq:tameness_N}
(v^T \; w^T) \om J_B \left( \begin{array}{c} v \\ w \end{array} \right)& = (v^TJ_0^T \;\; w^TJ_0^T)\left( \begin{array}{c} J_0v + Bw \\ J_0w \end{array} \right) = \|J_0 v\|^2 + \|J_0 w\|^2 - v^TJ_0B w \\
\notag &= 1 - v^TJ_0Bw \ge 1 - \|J_0 B\| \|v\| \|w\| \ge 1 - N/2 
\end{align}
where equalities hold when $v$ and $J_0Bw$ are colinear and $\|v\| = \|w\| = 1/\sqrt{2}$. The pair $(\om,J_B)$ is tame if and only if the above lower bound is positive, namely if and only if $N < 2$. Furthermore, the pair $(\om, J_B)$ is $J_B$-invariant if and only if $J_B^T \omega J_B = \omega$, which holds (as a straightforward computation of the matrix product shows) if and only if $B = 0$ or, equivalently,  $N = 0$.
\end{proof}
For later use, we also note that
\begin{align}
 \| J_B \| &:= \underset{\|v\|^2 + \|w\|^2 = 1}{\max} \; \left\| J_B \left(\begin{array}{c} v \\ w \end{array} \right) \right\| = \underset{\|v\|^2 + \|w\|^2 = 1}{\max} \; \sqrt{\|J_0v + Bw\|^2 + \|J_0 w\|^2} \\
\notag &\le \underset{\|v\|^2 + \|w\|^2 = 1}{\max} \; \sqrt{\|v\|^2 + \|w\|^2 + 2 \|v\|\|w\| N + N^2 \|w\|^2 }  \le \sqrt{1+N+N^2} \le 1 + N \; .
\end{align}

\section{Tame-to-tame inflation via Thom forms}\label{section:Thom forms}
In this section, we prove the tame-to-tame inflation lemma in the cases $Z\cdot Z=0$ and $Z\cdot Z<0$ following closely the original arguments given in~\cite{McDuff-J-inflation} and~\cite{Buse-Neg-inflation}. We also point out why this approach fails for surfaces with $Z\cdot Z>0$.

\subsection{Case of surfaces with trivial normal bundles}\label{Subsection:TrivialBundle}
After rescaling, we can suppose $\om(Z)=1$. By Weinstein's symplectic neighborhood theorem, a neighborhood of $Z$ can be symplectically identified with the product $Z\times D^{2}(r_0)$, equipped with the product symplectic form $\om_0=\sigma+ dx\wedge dy = \sigma+ rdr\wedge d\theta$, where $D^{2}(r_0)\subset\R^2$ is a standard disc of some radius $r_0>0$, and where $\sigma=\om_0|_{TZ}$. Let $J$ be a tame almost complex structure for which $Z=Z\times \{0\}$ is $J$-holomorphic. Given any $p\in Z$, we can find local symplectic coordinates $(x_1,y_1,x_2,y_2)$ that are unitary at $p$ and for which $Z$ is given by $x_2=y_2=0$. We write tangent vectors as pairs $(u,v)$ in which $u$ is horizontal and $v$ is vertical. In these coordinates $\om_0$ and $J$ are represented by the block matrices
\[
\om_0=\begin{pmatrix}J_0 & 0\\ 0 & J_0\end{pmatrix},~\quad\quad~J=\begin{pmatrix}A & B\\ C & D\end{pmatrix}.
\]
It follows that near $p\in Z$,
\begin{align*}
0<g_J\big((u,v),\, (u,v)\big)=||(u,v)||_{J}^{2}&:=\om_0\big((u,v),\, J(u,v)\big)\\
&\phantom{:}= u^{\trans}J_{0}^{\trans}Au + u^{\trans}J_{0}^{\trans}Bv
+ v^{\trans}J_{0}^{\trans}Cu + v^{\trans}J_{0}^{\trans}Dv.
\end{align*}
Note that at $p$ itself, the matrix $J$ takes the form
\begin{equation}\label{eq:block matrix expression for J along Z}
J_p=\begin{pmatrix}J_{0} & B_{0}\\ 0 & J_{0}\end{pmatrix}\; 
\text{~where~}\;
B_{0}=\begin{pmatrix}a&b\\b&-a\end{pmatrix}.
\end{equation}
Let $g$ be the scalar product on $T_Z M$ defined as in section~\ref{sec:tion:canonical_metric}. Because the matrices $A$ and $D$ are nondegenerate in a neighborhood of $p\in Z$, we can extend $g$ in a neighborhood of $p$ by setting
\begin{align*}
g\big((u,v),\, (u',v')\big)&:= u^{\trans}J_{0}^{\trans}Au' + v^{\trans}J_{0}^{\trans}Dv'\\
&\phantom{:}=g_A\big(u,\, u'\big)+ g_D\big(v,\, v'\big).
\end{align*}
Let $\|\cdot\|_g$ be the associated norm.

We observe that, in some possibly smaller neighborhood $Z\times D(r_1)$, $r_{1}\leq r_{0}$, the tameness of $J$ implies that $||J_{0}^{\trans}B||_{g}<2$, so that
\begin{equation}\label{eq:epsilon_1 in terms of |B|}
|u^{\trans}J_{0}^{\trans}Bv|\leq ||J_{0}^{\trans}B||_{g}\cdot||u||_{A}\cdot||v||_{D}\leq\frac{1}{2}||J_{0}^{\trans}B||_{g}\cdot\Big(||u||_{A}^{2}+||v||_{D}^{2}\Big)
\leq \epsilon_{1}\Big(||u||_{A}^{2}+||v||_{D}^{2}\Big)
\end{equation}
where $0<\epsilon_{1}<1$. (The middle inequality is just the rearrangement inequality applied to the positive numbers $||u||_A$ and $||v||_D$.)  
Near $Z$, the same ideas apply to the matrix $J_{0}^{\trans}C$, but because $C(0)=0$, we can factor $r$ out of $||J_{0}^{\trans}C||_{g}$ so that, in some possibly smaller neighborhood $Z\times D(r_2)$, $r_{2}\leq r_{1}$, 
\begin{equation}\label{eq:epsilon_2 in terms of |C|}
|u^{\trans}J_{0}^{\trans}Cv|\leq ||J_{0}^{\trans}C||_{g}\cdot||u||_{A}\cdot||v||_{D}\leq\frac{1}{2}r\epsilon_{2}\cdot\Big(||u||_{A}^{2}+||v||_{D}^{2}\Big)
\end{equation}
for some constant $\epsilon_{2}>0$. So, in some neighborhood of $p\in Z$, we have
\[ \big|u^{\trans}J_{0}^{\trans}Bv \big| \leq \epsilon_{1}\Big(u^{\trans}J_{0}^{\trans}Au + v^{\trans}J_{0}^{\trans}Dv\Big)
\quad\text{~and~}\quad
\big|v^{\trans}J_{0}^{\trans}Cu \big| \leq r\epsilon_{2}\Big(u^{\trans}J_{0}^{\trans}Au + v^{\trans}J_{0}^{\trans}Dv\Big)\]
for some constants $0<\epsilon_{1}<1$ and $0<\epsilon_{2}$.

Given $t\geq 0$, we want to define a taming symplectic form $\om'$ in class $[\om]+t\PD(Z)$. Note that the Thom class of $Z$ can be represented by the form $\rho(r)rdr\wedge d\theta$, where $\rho:[0,R]\to\R_+$ is a non-increasing cutoff function that is constant near $0$, that vanishes near $R$, and such that $\int_{D(R)} \rho(r)rdr d\theta = 1$. Set $\om_{f}=\sigma+ f(r) rdr\wedge d\theta$ for some $f(r)\geq 1$. Near $p\in Z$ we can write
\[
\om_{f}((u,v),J(u,v))
= u^{\trans}J_{0}^{\trans}Au + u^{\trans}J_{0}^{\trans}Bv
+fv^{\trans}J_{0}^{\trans}Cu + fv^{\trans}J_{0}^{\trans}Dv
\]
and
\begin{align}
\Big| u^{\trans}J_{0}^{\trans}Bv+fv^{\trans}J_{0}^{\trans}Cu \Big|
&\leq \Big| u^{\trans}J_{0}^{\trans}Bv\Big|+\sqrt{f}\Big|\left(\sqrt{f}v^{\trans}\right)J_{0}^{\trans}Cu \Big| \label{eq:inequality norm one}\\
&\leq \epsilon_{1}\Big(u^{\trans}J_{0}^{\trans}Au + v^{\trans}J_{0}^{\trans}Dv\Big) + \epsilon_{2}r\sqrt{f}\Big(u^{\trans}J_{0}^{\trans}Au + fv^{\trans}J_{0}^{\trans}Dv\Big)\label{eq:inequality norm two}\\
&\leq \epsilon_{1}\Big(u^{\trans}J_{0}^{\trans}Au + fv^{\trans}J_{0}^{\trans}Dv\Big) + \epsilon_{2}r\sqrt{f}\Big(u^{\trans}J_{0}^{\trans}Au + fv^{\trans}J_{0}^{\trans}Dv\Big)\nonumber\\
&= \Big(\epsilon_{1}+\epsilon_{2}r\sqrt{f}\Big) \Big(u^{\trans}J_{0}^{\trans}Au + fv^{\trans}J_{0}^{\trans}Dv\Big)\nonumber
\end{align}
It follows that $\om_f$ tames $J$ near $p$ whenever $\Big(\epsilon_{1}+\epsilon_{2}r\sqrt{f}\Big)<1$. Consequently, in order to prove the tame-to-tame inflation lemma for $Z\cdot Z=0$, we have to show that given $t\geq 0$ arbitrarily large, we can find a function $f\geq 1$ and a radius $0<R\leq r_2$ such that: (i) $f=1$ outside $D(R)$, (ii) $\epsilon_{1}+\epsilon_{2}r\sqrt{f}<1$ in $D(R)$, and (iii) the integral of $f-1$ over $D(R)$ is $t$. Now, the inequality $\epsilon_{1}+\epsilon_{2}r\sqrt{f}<1$ is equivalent to 
\[1\leq f(r) < \frac{(1-\epsilon_{1})^{2}}{r^{2}\epsilon_{2}^{2}}\]
which can be achieved by taking $R$ small enough. Moreover, since
\[\int_{D(R)} \frac{rdr d\theta}{r^{2}}\]
is unbounded, we can make the integral of $f$ on $D(R)$ as large as we want. Repeating the same argument for finitely many charts covering $Z$, this shows that we can find some $R>0$ and some function $f:D(R)\to\R$ such that $\om_f$ tames $J$ and $[\om_f]=[\om]+t\PD(Z)$.

\subsection{Case of surfaces with negative normal bundles}\label{subsection:Negativebundle}
The case of a surface of negative self-intersection is similar but requires a different local model. Suppose that $Z\subset M$ is $J$-holomorphic and that $Z\cdot Z= -m<0$. We can rescale $\om$ so that $\om(Z)=1$. A neighborhood of $Z$ can be identified with a tubular neighborhood of the zero section $\mathbf{0}$ of the symplectic normal bundle $\pi:N\to Z$. Let $P\subset N$ be the unit sphere bundle seen as a principal $U(1)$-bundle, and let $\beta$ be a connection one-form on $P$ such that $d\beta=m\pi^{*}\sigma$. Let $\Pi:N\setminus \mathbf{0}\to P$ be the Gauss map, and define the $1$-form $\alpha=\Pi^*\beta$ on $N\setminus \mathbf{0}$. Note that $\alpha$ has a pole of order $1$ along $Z$ while $r^{2}\alpha$ is everywhere smooth. Weinstein's theorem implies that $\om$ is isotopic to the standard form
\[\om_{0}=\pi^{*}(\sigma) + \frac{1}{2}d(r^{2}\alpha)\]
near $Z$. We define 
\[\om_{f}:= \pi^{*}(\sigma) + \frac{1}{2}d(r^{2}\alpha) - d(f(r)\alpha) = (1+\frac{1}{2}mr^{2}-mf)\pi^{*}\sigma + \left(1-\frac{f'}{r}\right)rdr\wedge\alpha\]
for some non-increasing function $f(r)\geq 0$ with support in a tubular neighborhood of $Z$. By Stokes' theorem, the cohomology class of $\om_{f}$ is $[\om]+f(0)\PD(Z)$. In particular, for such a form to tame $J$, we must have $0\leq f(r)< 1/m$ on $Z$, and if $0<M'<1/m$ is the inflation parameter we want to reach, we must have $f(0)=M'$.

Pick $p\in Z$ and choose a local symplectic frame compatible with the horizontal and vertical splitting given by the connection. With respect to such a frame, we have
\[
\om_{0}\big((u,v),(u',v')\big) = u^{\trans}J_{0}^{\trans}u' + v^{\trans}J_{0}^{\trans}v'
\]
and
\[
\om_{f}\big((u,v),(u',v')\big) = au^{\trans}J_{0}^{\trans}u' + bv^{\trans}J_{0}^{\trans}v'
\]
where 
\[a:=1-\frac{mf}{1+\frac{1}{2}mr^{2}} \text{~and~} b:=1-\frac{f'}{r}.\]
Since we suppose $0\leq f(r)<1/m$ is non-increasing, we have
\[0<a<1\text{~and~}1\leq b\]
where $a=a(r)$ is non-decreasing and bounded below by $a_0:=a(0)=1-mf(0)=1-mM'>0$. We can now write
\begin{align*}
\om_{f}\big((u,v),J(u,v)\big)
&= a\Big(u^{\trans}J_{0}^{\trans}Au + u^{\trans}J_{0}^{\trans}Bv\Big)
+ b\Big(v^{\trans}J_{0}^{\trans}Cu + v^{\trans}J_{0}^{\trans}Dv\Big)\\
&\hspace*{-6mm}=\om_{0}\big((u,v),J(u.v)\big) - \frac{mf}{1+\frac{1}{2}mr^{2}}\Big(u^{\trans}J_{0}^{\trans}Au + u^{\trans}J_{0}^{\trans}Bv\Big)
- \frac{f'}{r}\Big(v^{\trans}J_{0}^{\trans}Cu + v^{\trans}J_{0}^{\trans}Dv\Big).
\end{align*}
Given any $0<\epsilon_1<1$ and $R'>0$ such that
\begin{equation}\label{eq:B inequality}
\big|u^{\trans}J_{0}^{\trans}Bv \big| \leq \epsilon_1\Big(u^{\trans}J_{0}^{\trans}Au + v^{\trans}J_{0}^{\trans}Dv\Big)
\end{equation}
for all $r\leq R'$, we have
\begin{align}\label{eq:1st inequality}
a\big|u^{\trans}J_{0}^{\trans}Bv \big| &\leq \epsilon_{1}\sqrt{a}\Big(au^{\trans}J_{0}^{\trans}Au + v^{\trans}J_{0}^{\trans}Dv\Big)\\
&\leq \epsilon_{1}\sqrt{a}\Big(au^{\trans}J_{0}^{\trans}Au + bv^{\trans}J_{0}^{\trans}Dv\Big).
\end{align}
Similarly, if $\epsilon>0$ is such that, for $r\leq R'$, we have 
\[
\big|v^{\trans}J_{0}^{\trans}Cu \big| \leq r\epsilon\Big(u^{\trans}J_{0}^{\trans}Au + v^{\trans}J_{0}^{\trans}Dv\Big)
\]
then we can set $\epsilon_{2} = \epsilon/\sqrt{a_0}$ to get
\begin{equation}\label{eq:2nd inequality}
b\big|v^{\trans}J_{0}^{\trans}Cu \big| \leq r\epsilon_{2}\sqrt{b}\Big(au^{\trans}J_{0}^{\trans}Au + bv^{\trans}J_{0}^{\trans}Dv\Big).
\end{equation}
Note that for any fixed inflation parameter $0<M'<1/m$, both constants $\epsilon_1$ and $\epsilon_2$ are independent of $f$ and only depend on $J$, $R'$, and on the chosen symplectic frame. We then have
\begin{align*}
\big|au^{\trans}J_{0}^{\trans}Bv + bv^{\trans}J_{0}^{\trans}Cu\big|
&\leq a\big|u^{\trans}J_{0}^{\trans}Bv\big|+b\big|v^{\trans}J_{0}^{\trans}Cu\big|\\
&\leq \big(\epsilon_{1}\sqrt{a}+r\epsilon_{2}\sqrt{b}\big)\big(au^{\trans}J_{0}^{\trans}Au +bv^{\trans}J_{0}^{\trans}Dv\big).
\end{align*}
To ensure tameness of $J$, we want $\big(\epsilon_{1}\sqrt{a}+r\epsilon_{2}\sqrt{b}\big)<1$. Since we always have $\epsilon_{1}\sqrt{a}<1$, we can rewrite this as
\[
r\epsilon_{2}\sqrt{b}<1-\epsilon_{1}\sqrt{a}.
\]
Recall that the function $f$ must be supported in an arbitrarily small neighborhood of $Z$, must be non-increasing, and must be equal to $M'$ near $Z$. We will take $f$ to be a smoothing of a function $h$ of the form
\[
h(r)=
\begin{cases}
M' & 0\leq r \leq R_1\\
\left(\frac{M'}{\log(R_2/R_1)}\right)\cdot\big(\log(R_2)-\log(r)\big) & R_1\leq r\leq R_2\\
0 & R_2\leq r
\end{cases}
\]
for some constants $0<R_1<R_2<R<R'$. We can assume that the smoothing $f$ is non-increasing and is supported in $[0,R]$. Writing $c=\frac{M'}{\log(R_2/R_1)}$, it follows that $-f'(r)\leq c/r$, so that
\[
r\epsilon_{2}\sqrt{b}=r\epsilon_{2}\sqrt{1-f'/r}=\epsilon_{2}\sqrt{r^{2}-rf'}\leq\epsilon_{2}\sqrt{r^{2}+c}.
\]
Observe that we can always choose $0<R_1<R_2<R$ with $R$ small enough such that, for all $r<R$, we have
\[
\epsilon_{2}\sqrt{r^{2}+c}=\epsilon_2\sqrt{r^2+\frac{M'}{\log(R_2/R_1)}}<1-\epsilon_{1}<1-\epsilon_{1}\sqrt{a}.
\] 
This shows that a suitable function $f(r)$ can be locally constructed near every point $p\in Z$. Since $f(r)$ depends only on $r$, we can repeat the same argument in finitely many charts covering $Z$ to find a suitable function defined in a whole tubular neighborhood of~$Z$.

\subsection{Case of surfaces with positive normal bundles}
We now explain the appearance of an upper bound $T\sim C(J)\cdot\frac{1}{m}$ on the inflation parameter $t$ when we apply the previous arguments in the case of a surface of strictly positive self-intersection. Suppose that $Z\subset M$ is $J$-holomorphic and that $Z\cdot Z= m>0$. As before, we work in a  neighborhood of the zero section in the symplectic normal bundle endowed with the form
\[\om_{0}=\pi^{*}(\sigma) + \frac{1}{2}d(r^{2}\alpha)\]
where $\sigma=\om|_Z$ and where $\alpha$ is obtained from a connection one-form $\beta$ such that $d\beta=-m\pi^{*}\sigma$. Given $M'>0$, we define 
\[\om_{f}:= \pi^{*}(\sigma) + \frac{1}{2}d(r^{2}\alpha) - d(f(r)\alpha) = (1-\frac{1}{2}mr^{2}+mf)\pi^{*}\sigma + \left(r-\frac{f'}{r}\right)rdr\wedge\alpha\]
for some suitable non-increasing function $f(r)\geq 0$ which takes the value $M'$ near $Z$, and with support in an arbitrarily small neighborhood of $Z$. 

In a local symplectic frame compatible with the horizontal splitting given by the connection, we have
\[\om_{0}\big((u,v),(u',v')\big) = u^{\trans}J_{0}^{\trans}u' + v^{\trans}J_{0}^{\trans}v'\]
while 
\[\om_{f}\big((u,v),(u'.v')\big) = au^{\trans}J_{0}^{\trans}u' + bv^{\trans}J_{0}^{\trans}v'\]
where $a$ and $b$ are functions of $r$ given by 
\[a=1+\frac{mf}{1-\frac{1}{2}mr^{2}} \quad\text{~and~}\quad b=1-\frac{f'}{r}\geq 1.\]
This time, for $r<1/\sqrt{m}$, we have
\[a \geq 1 \quad\text{~and~}\quad b\geq 1.\]
As before,
\begin{align*}
\om_{f}\big((u,v),J(u,v)\big)
&= a\Big(u^{\trans}J_{0}^{\trans}Au + u^{\trans}J_{0}^{\trans}Bv\Big)
+ b\Big(v^{\trans}J_{0}^{\trans}Cu + v^{\trans}J_{0}^{\trans}Dv\Big)\\
\end{align*}
Given $0<\epsilon_1<1$ and $0<\epsilon_2$ such that 
\begin{equation}\label{eq:B inequality2}
\big|u^{\trans}J_{0}^{\trans}Bv \big| \leq \epsilon_1\Big(u^{\trans}J_{0}^{\trans}Au + v^{\trans}J_{0}^{\trans}Dv\Big)
\end{equation}
and 
\[
\big|v^{\trans}J_{0}^{\trans}Cu \big| \leq r\epsilon_2\Big(u^{\trans}J_{0}^{\trans}Au + v^{\trans}J_{0}^{\trans}Dv\Big)
\]
we have
\begin{align*}
\big|au^{\trans}J_{0}^{\trans}Bv + bv^{\trans}J_{0}^{\trans}Cu\big|
&\leq a\big|u^{\trans}J_{0}^{\trans}Bv\big|+b\big|v^{\trans}J_{0}^{\trans}Cu\big|\\
&\leq \epsilon_{1}\sqrt{a}\big(au^{\trans}J_{0}^{\trans}Au +v^{\trans}J_{0}^{\trans}Dv\big) + r\epsilon_{2}\sqrt{b}\big(u^{\trans}J_{0}^{\trans}Au +bv^{\trans}J_{0}^{\trans}Dv\big)\\
&\leq \big(\epsilon_{1}\sqrt{a}+r\epsilon_{2}\sqrt{b}\big)\big(au^{\trans}J_{0}^{\trans}Au +bv^{\trans}J_{0}^{\trans}Dv\big)
\end{align*}
We want $\big(\epsilon_{1}\sqrt{a}+r\epsilon_{2}\sqrt{b}\big)<1$. In particular, for $r$ very close to $0$, we have $f=M'$ and $f'=0$, which implies $a\geq 1+mM'$ and $b=1$. Consequently,
\[
M'=f(r)<\left(\frac{1-\epsilon_{1}^{2}}{\epsilon_{1}^{2}}\right)\left(\frac{1-\frac{1}{2}mr^{2}}{m}\right)< \left(\frac{1-\epsilon_{1}^{2}}{\epsilon_{1}^{2}}\right)\frac{1}{m}
\]
Looking back at equation~\eqref{eq:epsilon_1 in terms of |B|}, we see that $\frac{1}{2}\|B\|_g<\epsilon_1<1$ near $p\in Z$. Therefore the last inequalities impose an upper bound on $M'$ of the form
\[M'<C(J)\cdot\frac{1}{m}\] 
where $C(J)$ approaches $0$ as $N(J):=\|B\|_g$ approaches its supremum $2$.

\begin{remark}
In~\cite{McDuff-J-inflation} and~\cite{Buse-Neg-inflation}, the block-matrix expression for $J$ given in equation~\eqref{eq:block matrix expression for J along Z} is assumed to satisfy $B=C=0$ along $Z$, which is only possible when the symplectic normal bundle $\nu_Z$ is $J$-invariant. In turns, this implies that $J$ is compatible with $\om$ along $Z$. Under this compatibility assumption, the inequality 
\[ \big|u^{\trans}J_{0}^{\trans}Bv \big| \leq \epsilon_{1}\Big(u^{\trans}J_{0}^{\trans}Au + v^{\trans}J_{0}^{\trans}Dv\Big)\]
that is used above to go from equation~\eqref{eq:inequality norm one} to equation~\eqref{eq:inequality norm two} can be improved to
\[ \big|u^{\trans}J_{0}^{\trans}Bv \big| \leq r\epsilon_{1}'\Big(u^{\trans}J_{0}^{\trans}Au + v^{\trans}J_{0}^{\trans}Dv\Big)\]
for some $\epsilon_{1}'>0$. The extra $r$ factor is what makes the slightly simpler arguments given in~\cite{McDuff-J-inflation} and~\cite{Buse-Neg-inflation} work in the compatible case even if $Z$ is of positive self-intersection.\eoe
\end{remark}

\section{Initial considerations towards the Preparation Lemma}\label{section:towards the preparation lemma}

Our strategy to prove the Preparation lemma is to keep the symplectic form $\om$ fixed while deforming the tamed almost complex structure $J$ through an isotopy obtained from local diffeotopies. We shall deduce the Preparation lemma from the following statement\footnote{In what follows, given a diffeomorphism $\psi$ of $M$ and a bundle morphism $J$ of $TM$, we shall write $\psi^*J$ for the bundle morphism $(\psi^*J)_p(v) = [(T_p \psi)^{-1} \circ J_{\psi(p)} \circ (T_p \psi)](v)$ where $v \in T_p M$.}:

\begin{theorem}\label{thm2}
There is a diffeomorphism $\psi \in\Diff(M,~ \id\text{~on~}Z)\cap\Diff_0(M)$ i.e. which fixes $Z$ pointwise and is isotopic to the identity, such that $\psi^*J$ is compatible with $\omega$ along $Z$ and is $\om$-tame everywhere. Moreover, $\psi$ can be chosen $C^0$-small and supported in any open neighborhood of $Z$.
\end{theorem}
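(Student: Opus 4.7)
The plan is to construct $\psi$ as a diffeomorphism fixing $Z$ pointwise whose 1-jet along $Z$ rotates the $\om$-orthogonal bundle $V_{2}=(TZ)^{\om}$ onto a $J$-invariant complement of $TZ$, and which is the identity outside an arbitrarily thin tubular neighborhood of $Z$. Compatibility of $\om$ with $\psi^{*}J$ along $Z$ will follow from the splitting principle in Sections~\ref{subsec:tame_comp}--\ref{section:symplectic splitting}, while global $\om$-tameness of $\psi^{*}J$ is the main technical point, because $T\psi$ cannot be $C^{1}$-small along $Z$ unless $J$ was already $\om$-compatible there.

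At each $p\in Z$, use the splitting $T_{p}M=T_{p}Z\oplus V_{2,p}$ and a unitary symplectic basis adapted to it (Section~\ref{subsubsec:unitary-basis}); then $J_{p}$ has the upper-triangular block form of Section~\ref{section:tameness+compatibility conditions}, with diagonal blocks $J_{0}$ and off-diagonal block $B_{p}$, with $\|B_{p}\|<2$ by Lemma~\ref{lemma:tameness condition}. The linear equation $LJ_{0}-J_{0}L=B_{p}$ admits the canonical $J_{0}$-antilinear solution $L_{p}:=\tfrac{1}{2}J_{0}B_{p}$, smooth in $p\in Z$, of norm $\|L_{p}\|=\tfrac{1}{2}\|B_{p}\|<1$. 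Setting $\Phi_{p}:=\id_{T_{p}M}+L_{p}$ (where $L_{p}$ is viewed as a nilpotent endomorphism of $T_{p}M$ vanishing on $T_{p}Z$ with image in $T_{p}Z$), one has $\Phi_{p}|_{T_{p}Z}=\id$ and $\Phi_{p}(V_{2,p})$ is a $J$-invariant complement to $T_{p}Z$. This $\Phi$ is the target 1-jet of $\psi$ along $Z$.

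To realize $\Phi$ geometrically, fix a tubular neighborhood $\Theta:V_{2}\supset\Uu\to M$ with $\Theta|_{Z}=\id$ and trivial 1-jet along $Z$, an auxiliary metric on $Z$ with exponential map $\exp^{Z}$, and a smooth cutoff $\chi:[0,\infty)\to[0,1]$ equal to $1$ near $0$ and supported in $[0,1]$. For small $\delta>0$, set
\[
\psi_{\delta}\bigl(\Theta(p,v)\bigr):=\Theta\bigl(\exp^{Z}_{p}(\chi(|v|/\delta)\,L_{p}v),\,v\bigr),\qquad \psi_{\delta}=\id\ \text{off}\ \Uu
\]
(equivalently, $\psi_{\delta}$ is the time-$1$ flow of an appropriately cutoff vector field whose 1-jet along $Z$ encodes $L$). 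Then $\psi_{\delta}$ fixes $Z$ pointwise, is supported in the $\delta$-tube around $Z$, satisfies $\|\psi_{\delta}-\id\|_{C^{0}}=O(\delta)$ --- so is $C^{0}$-small and can be placed in any prescribed open neighborhood of $Z$ --- and has $T\psi_{\delta}|_{Z}=\Phi$. Replacing $L$ by $tL$ for $t\in[0,1]$ gives an isotopy from $\id$ to $\psi_{\delta}$, so $\psi_{\delta}\in\Diff_{0}(M)$. Compatibility of $(\om,\psi_{\delta}^{*}J)$ along $Z$ is then immediate: $T\psi_{\delta}|_{Z}$ sends $V_{2}$ to a $J$-invariant plane and is the identity on $TZ$, so $\psi_{\delta}^{*}J$ preserves both summands of the symplectic splitting at each $p\in Z$, and tameness on each 2-dimensional summand forces $\om$-compatibility there (Section~\ref{subsec:tame_comp}).

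The remaining --- and main --- obstacle is to verify global $\om$-tameness of $\psi_{\delta}^{*}J$. This is automatic outside $\supp(\psi_{\delta})$ and, by the previous paragraph, on $Z$ itself. The delicate region is the annular tube $\{0<|v|<\delta\}$: there $T\psi_{\delta}$ is \emph{not} $C^{1}$-close to the identity, because the $\chi'$-contribution to its off-diagonal block has size $O(\|L\|)$ independent of $\delta$, so $C^{1}$-openness of tameness does not apply. The plan is to verify tameness pointwise by means of the block analysis of Section~\ref{section:tameness+compatibility conditions}: computing $\psi_{\delta}^{*}J$ in the trivialization induced by $\Theta$ and the splitting, its new skew part is, to leading order in $\delta$,
\[
B'\;=\;(1-\chi(|w|))\,B_{p}\;+\;\chi'(|w|)\,R_{p}(w)\;+\;O(\delta),\qquad w=v/\delta,
\]
where $R_{p}(w)$ is a bounded rank-$\leq 2$ correction with $\|R_{p}(w)\|\leq 2\|L_{p}\|\,|w|$. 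By Lemma~\ref{lemma:tameness condition}, tameness reduces to the pointwise inequality $\|B'\|<2$. Since $\|B_{p}\|$ attains a maximum $N_{0}<2$ on the compact curve $Z$, one must select a cutoff $\chi$ whose profile keeps $\bigl(1-\chi(|w|)+|\chi'(|w|)|\,|w|\bigr)\,N_{0}$ strictly below $2$ on the rescaled tube, and then take $\delta$ small enough to absorb the $O(\delta)$ remainder. Designing such a $\chi$ --- or, when $N_{0}$ is too close to $2$, iterating small-amplitude twists in place of a single one --- is the crux of the proof.
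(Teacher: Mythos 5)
Your linear algebra is correct: setting $L_p=\tfrac{1}{2}J_0B_p$ does solve $L_pJ_0-J_0L_p=B_p$, the plane $\Phi_p(V_{2,p})=\{(L_pv,v)\}$ is $J$-invariant, and conjugating $J_p$ by $\Phi_p=\id+L_p$ kills the off-diagonal block, so the target $1$-jet is the right one (it is essentially the paper's $\Psi_{1/2}$ without the $\alpha$-normalization, which only matters for the paper's subsequent estimates). Your deduction of compatibility along $Z$ is also sound. However, the proposal has a genuine gap that you partly acknowledge yourself: the entire difficulty of the theorem is the global tameness of $\psi^*J$, and your argument stops exactly where that difficulty begins. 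The formula $B'=(1-\chi)B_p+\chi'R_p(w)+O(\delta)$ is stated without derivation, and the remaining task --- showing that one can choose $\chi$ with $(1-\chi(x)+x|\chi'(x)|)\|N_J\|<2$ on $[0,1]$ and then absorb the $O(\delta)$ remainder --- is declared to be ``the crux'' and left open. This is not a small finishing touch. In fact, since for any admissible non-increasing cutoff $\chi$ with $\chi(0)=1$, $\chi(1)=0$ one has $\int_0^1(1-\chi+x|\chi'|)\,dx=1$, the function in question always exceeds $1$ somewhere, so a single twist works only after a careful choice of $\chi$ that becomes increasingly singular (roughly $\chi(x)\sim\epsilon/x$) as $\|N_J\|\to 2$, and one then has to verify that the $O(\delta)$ constants --- which depend on $\chi'$ and on the variation of $J$ and of the tubular-neighborhood trivialization off $Z$ --- can still be beaten by shrinking $\delta$. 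None of this is done.

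By contrast, the paper avoids this delicate single-shot estimate entirely: it time-discretizes the linear isotopy $\Psi_t$ into steps $\Phi_i=\Psi_{t_{i+1}}\circ\Psi_{t_i}^{-1}$ that are each $C^0$-close to the identity (Lemma~\ref{lem:newtechnical}), extends each step to an ambient diffeomorphism with controlled $C^1$-norm and controlled support via a quantitative Whitney extension result (Proposition~\ref{prop:Whitney_diff_2}), and propagates tameness step by step using the stability estimate of Lemma~\ref{lem:stability} together with a nested choice of supports $W_i$ over which the tameness margin $\Upsilon_j[J_i]$ is bounded below. Your parenthetical remark about ``iterating small-amplitude twists'' is indeed pointing in the direction of the paper's argument, but you would still need to prove an analogue of Lemma~\ref{lem:stability} (a quantitative $C^1$-openness of tameness) and of Proposition~\ref{prop:Whitney_diff_2} (an extension producing diffeomorphisms with prescribed $1$-jet, small $C^1$-norm, and arbitrary thin support), and to organize the induction with shrinking supports as in Section~\ref{section:prep_lemma}. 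Without those pieces, the proposal does not yet constitute a proof.
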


\noindent The preparation lemma readily follows from this Theorem: denoting by $\{\psi_t\}_{0 \le t \le 1}$ the isotopy between $\psi_0 = id_M$ and $\psi_1 = \psi$, we get a path $\omega_t = (\psi_t^{-1})^* \omega$ of cohomologous closed $2$-forms such that $\omega_0 = \omega$ and $\omega_1 =: \omega'$, with $\omega'$ taming $J$ over the whole of $M$ and compatible with it along $Z$. 

\begin{remark} In fact, the above isotopy $\{\psi_t\}_{0 \le t \le 1}$ can be chosen so that $Z$ is fixed pointwise and $J_t = \psi_t^* J$ is $\omega$-tamed for all $0 \le t \le 1$. A first way to see this is by slightly refining our proof of Theorem \ref{thm2}, using Remark \ref{rem:isotopy_2} to define an appropriate isotopy and applying Lemma \ref{lem:stability} to it. Another way is to start with the $\psi$ given by Theorem \ref{thm2} and apply Moser's path argument to change the isotopy: more precisely,  the symplectic form $\omega' := (\psi^{-1})^*\omega$ is compatible with $J$ along $Z$ and cohomologous to $\omega$. Hence the path $\omega_s := (1-s)\omega + s \omega'$ ($s \in [0,1]$) consists of cohomologous forms that are all symplectic, as they all tame $J$, and which all coincide with $\omega$ on $TZ$. By a refinement of Moser's trick (cf. Remark \ref{rem:moser_trick}), this path between $\omega$ and $\omega'$ is induced by pullbacks along a diffeotopy of $M$ fixing $Z$ pointwise.\eoe
\end{remark}\vspace{0pt}

\subsection{Overview of the argument} 
Let $T_Z M$ denote the tangent bundle of $M$ restricted to $Z$. It is relatively easy to construct an isotopy of automorphisms $\Psi_t : T_Z M \to T_Z M$ (lifting the identity $id : Z \to Z$), $t\in[0,s]$, such that $\Psi_t^*J$ is $\omega$-tame and $\Psi_s^*J$ is compatible with $\omega$; see Section~\ref{section:linear isotopy} below. Due to the large flexibility present within the category of smooth manifolds, it seems reasonable to expect $\Psi_s$ to be the restriction to $T_Z M$ of the differential of a diffeomorphism $\psi$ of $M$. This easily follows from the ‘Whitney extension-type’ result proved in Section~\ref{section:Whitney_ext}. Whether such an extension $\psi$ of $\Psi_s$ can be found such that $\omega$ tames $\psi^*J$ everywhere makes the problem much more delicate. The purpose of Section~\ref{sec:stability} is to obtain a quantitative control over the $\omega$-tameness of almost complex structures under the action of sufficiently $C^1$-small diffeomorphisms with given supports. Granted this, our strategy consists in finding a suitable constant $\epsilon>0$ and a partition $t_0=0<t_1<\cdots<t_d=s$ of the interval $[0,s]$ such that the $C^1$-norm of $\psi_{t_i}\psi_{t_{i-1}}^{-1}$ is bounded by $\epsilon$. Starting with $t_1$, we iteratively find extensions of the automorphisms $\Psi_{t_i}\Psi_{t_{i-1}}^{-1}$ whose supports form a nested sequence of tubular neighborhoods chosen in such a way that the control over the $C^1$ norm suffices to ensure tameness of the pull-back of $J$ everywhere on $M$. Roughly speaking, finding such a nested sequence of supports is possible as the "degree of compatibility" of $\om$ and $J$ improves near $Z$ after each step. The details are provided in Section~\ref{section:prep_lemma}.

\subsection{Isotopy of the tangent bundle \texorpdfstring{$T_ZM$}{TZM}}\label{section:linear isotopy}

Our first step in the proof of Theorem \ref{thm2}, as done in section~\ref{subsubsec:setup_lin_level}, consists in establishing the existence of an appropriate isotopy of fiberwise linear automorphisms of $T_Z M$. This motivates the considerations of this section.\vspace{0pt}

The $\om_t$-orthogonal to $V_1 = \R^2 \oplus 0$ is the subspace $W_t := \{ (tJ_0Bv, v) \, | \, v \in \R^2 \}$. For $t \in [0,1]$, we define the map
\[ L_t : V_2 = 0 \oplus \R^2 \to V_1 = \R^2 \oplus 0 : v \mapsto tJ_0Bv \, .\]
From them, we construct the following diffeotopy of $V = V_1 \oplus V_2 =  \R^2 \oplus \R^2$:
\[ \Psi_t : \R^2 \oplus \R^2 \to \R^2 \oplus \R^2 : (u,v) \mapsto (u+ \alpha(t) L_t(v), \alpha(t)v) \qquad (t \in \R)\]
where $\alpha(t) := (1 - N^2 \, t(1-t))^{-1/2}$. (This is well-defined precisely because $N < 2$.) Each map $\Psi_t$ is indeed invertible, with inverse 
\[ \Psi_t^{-1}(u,v) = (u - L_t(v), \alpha(t)^{-1}v) \, .\]
By similar arguments as in Section \ref{section:tameness+compatibility conditions}, we get the estimate
\begin{align}\label{eq:norme_psi-id}
 \| \Psi_t - Id \| &:= \underset{\|u\|^2 + \|v\|^2 = 1}{\max} \; \left\| \Psi_t(u,v) - (u,v) \right\| \le \left((\alpha(t)-1)^2 + (t N \alpha(t))^2\right)^{1/2} \, .
\end{align}
 Straightforward calculations also yield
\begin{align} \label{eq:matrixform4} \Psi_t^*\om_t : = \Psi_t^T \om_t \Psi_t = \om \; \text{~and~} \; \Psi_t^*J := \Psi_t^{-1}J\Psi_t =  \begin{pmatrix} J_0 & (1-2t)\alpha(t) B \\ 0 & J_0  \end{pmatrix} \, .\end{align}

We stress that all the objects we just defined have basis-free descriptions, and are thus canonically associated with the pair $(\omega, J)$. 

Consider  $N(t) := \| (1-2t)\alpha(t)B\| = |1-2t| \alpha(t) N $, where the norm is still computed with respect to the standard scalar product on $\R^4$. We note that $t \mapsto N(t)$ is symmetric about $t = 1/2$, is decreasing over $[0, 1/2]$, and satisfies $N(0) = N$ and  $N(1/2) = 0$. Consequently and since $N < 2$, $N(t) < 2$ for all $t \in [0,1]$, which means that all pairs $(\om, \Psi_t^* J)$ -- equivalently, all pairs $(\om_t, J)$ -- are tame for $t \in [0,1]$.

In view of Equation \ref{eq:matrixform4}, all complex structures $\Psi_t^* J$ determine the same complex structures on $V_1$ and $V_2$, namely $J_1$ and $J_2$. Accordingly, from the discussion in Section \ref{sec:tion:canonical_metric}, for each $t \in [0,1]$, the canonical metric associated with the tame pair $(\om, \Psi_t^* J)$  is equal to the canonical metric $g$ associated with $(\om, J)$. It hence follows that for each $t \in [0,1]$, $N(t)$ is the norm of the skew part of $\Psi_t^* J$.

In conclusion, given the tame pair $(\om, J)$, the corresponding pairs $(\om, \Psi_t^*J)$  are tame for all $t \in [0,1]$ and they all define the same canonical structures $(V_1, V_2, J_1, J_2, g)$ on $(V, \omega)$. Moreover, from $N(1/2) = 0$, we see that the pair $(\omega, \Psi_{1/2}^*J)$ is compatible. For the purpose of the preparation lemma, we only need to focus on the time-interval $t \in [0,1/2]$.

\subsection{A technical lemma} 
The following estimate will be used in subsections~\ref{subsubsec:epsilon} and~\ref{subsec:Choice_of_partition}.

\begin{lemma}[Estimates on $|\Psi_t|$]\label{lem:newtechnical}
Using notation from the setup in Section \ref{section:preliminary considerations}: let $N = \| B\| < 2$.  For $\epsilon > 0$, if $0 \le t' < t \le 1/2$ satisfy $t - t' < \dfrac{\epsilon}{\sqrt{2}} \left( \dfrac{1}{N} - \dfrac{1}{2} \right)$, then $\| \Psi_t \circ \Psi_{t'}^{-1} - Id \| < \epsilon$.
\end{lemma}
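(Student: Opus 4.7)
The plan is to reduce the operator-norm bound to two scalar inequalities via an explicit computation of $\Psi_t \circ \Psi_{t'}^{-1}$. Since $L_{t'} = t' J_0 B$ is linear, we have $\Psi_{t'}^{-1}(u,v) = (u - t' J_0 B v, \alpha(t')^{-1} v)$; substituting this into the formula for $\Psi_t$ from Section~\ref{section:linear isotopy} yields
\[ \bigl( \Psi_t \circ \Psi_{t'}^{-1} - Id \bigr)(u,v) = \bigl( (rt - t')\, J_0 B v, \; (r - 1)\, v \bigr), \qquad r := \alpha(t)/\alpha(t') \ge 1. \]
The right-hand side depends only on $v$, so $\|J_0 B v\| \le N \|v\|$ and $\|v\| \le 1$ on the unit sphere produce
\[ \| \Psi_t \circ \Psi_{t'}^{-1} - Id \|^2 \le N^2 (rt - t')^2 + (r - 1)^2 . \]

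The problem then reduces to establishing, for all $0 \le t' < t \le 1/2$, the two scalar estimates
\[ (a) \quad r - 1 \le \frac{2 N (t-t')}{2-N}, \qquad (b) \quad rt - t' \le \frac{2 (t-t')}{2-N}. \]
Granting these, substitution gives $\| \Psi_t \circ \Psi_{t'}^{-1} - Id \|^2 \le 8 N^2 (t-t')^2 / (2-N)^2$, hence $\| \Psi_t \circ \Psi_{t'}^{-1} - Id \| \le 2\sqrt{2}\, N (t-t') / (2-N)$, and the hypothesis $t - t' < \frac{\epsilon}{\sqrt{2}} \bigl( \frac{1}{N} - \frac{1}{2} \bigr) = \frac{\epsilon(2-N)}{2\sqrt{2}\, N}$ produces exactly the desired bound $< \epsilon$.

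For (a), the idea is to exploit the closed form $\alpha(s)^2 = (1 - N^2 s(1-s))^{-1}$ to obtain the exact factorization
\[ \alpha(t)^2 - \alpha(t')^2 = N^2 (t-t')(1 - t - t')\, \alpha(t)^2\, \alpha(t')^2, \]
so that $r^2 - 1 = N^2 (t-t')(1-t-t')\, \alpha(t)^2$. Applying $r - 1 \le (r^2 - 1)/2$ (valid since $r \ge 1$) together with the elementary bounds $\alpha(s)^2 \le 4/(4 - N^2)$ on $[0, 1/2]$, $1 - t - t' \le 1$, and $N / (2+N) \le 1$ then yields (a). Estimate (b) follows immediately from the decomposition $rt - t' = t(r-1) + (t-t')$ using (a) and the bound $t \le 1/2$.

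The main technical obstacle is arriving at a bound that scales as $(2-N)^{-1}$ rather than the weaker $(2-N)^{-3/2}$ one would obtain by naively applying the mean value theorem to $\alpha$ itself together with the crude estimate $\alpha'(s) \le \tfrac{N^2}{2} \alpha(1/2)^3$. The trick is to work algebraically with the difference $\alpha(t)^2 - \alpha(t')^2$, which factors exactly, and to pass back to $\alpha(t) - \alpha(t')$ only at the very end via the identity $a - b = (a^2 - b^2)/(a + b)$ with the sharp lower bound $\alpha(t) + \alpha(t') \ge 2$.
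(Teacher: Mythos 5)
Your proof is correct and follows essentially the same route as the paper: the explicit formula for $(\Psi_t\circ\Psi_{t'}^{-1}-Id)(u,v)$ depending only on $v$, the exact factorization $\alpha(t)^2-\alpha(t')^2 = N^2(t-t')(1-t-t')\alpha(t)^2\alpha(t')^2$, the passage from $r^2-1$ to $r-1$ via $r+1\ge 2$, and the decomposition $rt-t' = t(r-1)+(t-t')$ with $t\le 1/2$ all appear in the paper's argument. The only cosmetic difference is that you package the estimate as the two explicit scalar inequalities (a) and (b), whereas the paper reduces to showing $r-1<\epsilon/\sqrt 2$ and verifies each summand separately; you may also wish to dispose of the degenerate case $N=0$ at the outset, as the paper does, to avoid the formal division by $N$ in the final step.
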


\begin{proof} 
If $N=0$, then $\Psi_t = \Psi_{t'}$ for all $t, t' \in [0,1/2]$ and the claim is clear. Hence we consider $N > 0$. For convenience, set $C := \epsilon/\sqrt{2}$.

First, we note that
\[ ( \Psi_t \circ \Psi_{t'}^{-1})(u,v) = \left(  u - L_{t'}(v) + \dfrac{\alpha(t)}{\alpha(t')} L_t(v)  \, , \, \dfrac{\alpha(t)}{\alpha(t')} v \right) \, .  \]
Hence
\begin{flalign*}
(\Psi_t \circ \Psi_{t'}^{-1})(u,v) - (u,v) &= \left(  - L_{t'}(v) + \dfrac{\alpha(t)}{\alpha(t')} L_t(v)  \, , \, \left( \dfrac{\alpha(t)}{\alpha(t')} - 1 \right) v \right) \\
&= \left( \left( \dfrac{\alpha(t)}{\alpha(t')}t - t' \right) J_0 B v  \, , \, \left( \dfrac{\alpha(t)}{\alpha(t')} - 1 \right) v \right) \, .
\end{flalign*}

Thus
\begin{flalign*}
\| \Psi_t \circ \Psi_{t'}^{-1} - Id \|^2 &= \underset{\|u\|^2 + \|v\|^2 = 1}{\max}  \, \| (\Psi_t \circ \Psi_{t'}^{-1})(u,v) - (u,v) \|^2\\
&=   \left( \dfrac{\alpha(t)}{\alpha(t')}t - t' \right)^2 N^2 + \left( \dfrac{\alpha(t)}{\alpha(t')} - 1 \right)^2 \\
&\le \left[ \left( \dfrac{\alpha(t)}{\alpha(t')} - 1 \right) t + (t - t') \right]^2 N^2 + \left( \dfrac{\alpha(t)}{\alpha(t')} - 1 \right)^2 \\
&\le \left[ \left( \dfrac{\alpha(t)}{\alpha(t')} - 1 \right)\dfrac{1}{2} + (t - t') \right]^2 N^2 + \left( \dfrac{\alpha(t)}{\alpha(t')} - 1 \right)^2 \, .
\end{flalign*}
The last inequality follows since $0 \le t' < t \le 1/2$, which implies $(\alpha(t)/\alpha(t')) - 1 > 0$. It therefore suffices to prove that $(\alpha(t)/\alpha(t')) - 1 < C$ whenever $t-t' < C ((1/N) - (1/2))$, since this would give the sought-after estimate $\| \Psi_t \circ \Psi_{t'}^{-1} - Id \|^2 \le 2C^2 = \epsilon^2$. For this, we compute
\begin{flalign*}
\left( \dfrac{\alpha(t)}{\alpha(t')} - 1 \right) & \le \dfrac{1}{2} \left( \dfrac{\alpha(t)}{\alpha(t')} + 1 \right)\left( \dfrac{\alpha(t)}{\alpha(t')} - 1 \right) =  \dfrac{1}{2} \left( \left( \dfrac{\alpha(t)}{\alpha(t')}  \right)^2 - 1 \right) \\
&= \dfrac{N^2 [t(1-t) - t'(1-t')]}{2[1 - N^2 t(1-t)]} = \dfrac{N^2  (1-(t+t')) (t-t')}{2[1 - N^2 t(1-t)]} \\
& < \dfrac{N^2 (t-t') }{1 - (N^2/4)} < \dfrac{C ((1/N)-(1/2))}{ (1/N^2) -  (1/4)} = \dfrac{C}{ (1/N) +  (1/2)} < C \, .
\end{flalign*}
\end{proof}

\section{Quantitative stability of the local tameness condition}\label{sec:stability}

Let's consider a closed symplectic manifold $(M, \omega)$. Morally, our aim in this section is to get some quantitative control over the variation of the "degree of tameness" between two almost complex structure $J$ and $\psi^* J$ in terms of the $C^1$-size of the diffeomorphism $\psi$. The exact statement is given in Lemma \ref{lem:stability} below, whose formulation requires some preliminary setup.
\\

We fix a finite open atlas $\{(U_i, \phi_i : U_i \to \R^n)\}_{i=1}^N$ of $M$ by Darboux charts for $\omega$. Shrinking the open sets if necessary, we may and shall require that the pullbacks $g_i := \phi^* g_{std}$ of the standard Euclidean metric $g_{std}$ are all equivalent to $g|U_i$ for some given auxiliary Riemannian metric $g$ on $M$,\footnote{The symbol `$g$' was used to denote the canonical scalar product in Section \ref{sec:tion:canonical_metric}. Our recycling of the symbol is voluntary, as we shall pick during the proof of the preparation lemma an auxiliary Riemannian metric $g$ which coincides with the canonical scalar product along $Z$.} in the sense that there exists $K > 0$ such that $K^{-1} g_i(v,v) \le (g|U_i)(v,v) \le K g_i(v,v)$ for all $v \in TU_i$ and all $i$.

It is well known that there exists an open cover  $(V_i)_{i=1}^N$ of $M$ refining $(U_i)_{i=1}^N$ such that $\overline{V_i} \subset U_i$ for all $1 \le i \le N$. We fix such a refinement and we define
\[ \delta_c := \underset{1 \le i \le N}{\min} \, \dist_g( \overline{V_i}, M \setminus U_i) > 0 \, . \]
We shall assume that $\delta_c$ is smaller than both $1$ and the injectivity radius of the metric $g$ on $M$.

Given an almost complex structure $J$ on $U_i$, for each $1 \le i \le N$, we define the continuous function 
\[ \Upsilon_i[J] \, : \,  U_i \to \R \, : \, p \mapsto \Upsilon_i[J](p) := \underset{v \in (S_i)_p M}{\min} \, \omega_p(v, J_p v)   \]
where $(S_i)_p M := \{ v \in T_p U_i \, : \, (g_i)_p(v,v) = 1 \}$ is the sphere bundle defined by the metric $g_i$. We observe that $J$ is $\omega$-tame over the whole of $M$ if and only if all the functions $\Upsilon_i[J]$ are positive on their respective domains.

For each $1 \le i \le N$, given a bundle endomorphism $F$ of $TU_i$ (over the identity $id|U_i$) and a subset $S \subset U_i$, we set
\[  \| F \|_{C^0(S)} := \underset{p \in S}{\sup} \, \underset{v  \in S_p M}{\max} \; [(g_i)_p(F_p v, F_p v)]^{1/2} \, . \]
Typically, we shall consider $F = J$ or $F = dJ$, where $J$ is an almost complex structure and $dJ$ is computed with respect to the coordinates given by the chart $\phi_i$.

We shall consider the group $\Diff(M)$ of smooth diffeomorphisms of $M$ to be equipped with the $C^1$-Whitney topology (see~\cite[Chapter 2]{H}). In this context, we have:

\begin{lemma}\label{lem:stability} 
Given any constants $C > 0$ and $\eta > 0$, there exists a constant $\delta = \delta_{C, \eta} > 0$ such that the following holds:

\begin{enumerate}[(a)]
\item Let $B = B_{C, \eta} \subset \Diff(M)$ be the set of those diffeomorphisms satisfying
\[
\dist_g(p, \psi(p)) < \delta \text{~for all~} p \in M
\]
\[
\text{and~} \| T_p \psi - Id \|_{g_i} < \delta \text{~for all~} 1 \le i \le N \text{~and all~} p \in \overline{V_i}.
\]

\item Given any $\psi \in B$, set
\[
S:=S(\psi) = \{ q \in M ~:~ \dist_g(q,\, \overline{\supp(\psi)}) < \max_{x \in M} \dist_g (x, \psi(x)) \, \}.
\]

\item Given an almost complex structure $J$ on $M$ and a $\psi\in B$, let $\lambda:=\lambda(J,\psi) > 0$ be such that the estimates
\begin{flalign}
\notag  \| J \|_{C^0(S \cap U_i)}  <  \lambda C \; \text{~and~} \; \| d J \|_{C^0(S \cap U_i)} < \lambda C 
\end{flalign}
hold for all $1 \le i \le N$. 
\end{enumerate}
Then for all $1 \le i \le N$, we have
\[  \| \Upsilon_i[\psi^*J] - \Upsilon_i[J]\|_{C^0(V_i)} := \underset{p \in V_i}{\max} \, | \Upsilon_i[\psi^* J](p) - \Upsilon_i [J](p) | \, < \, \lambda \eta \, . \]
\end{lemma}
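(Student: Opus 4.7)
The plan is to reduce the variation of $\Upsilon_i$ to a pointwise operator-norm estimate on $(\psi^*J)_p - J_p$, and then to control this difference by a triangle-inequality decomposition whose three pieces are bounded in terms of $\delta$, $C$, and fixed atlas constants.

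Two initial reductions simplify the argument. If $p \in V_i$ lies outside $\overline{\supp\psi}$, then $\psi$ is the identity near $p$, so $T_p\psi = \mathrm{Id}$, $\psi(p) = p$, and $(\psi^*J)_p = J_p$; hence there is nothing to prove at such points, and I may restrict attention to $p \in V_i \cap S$. Imposing $\delta < \min(\delta_c, \mathrm{inj}(g))$ then ensures two things: $\psi(p) \in U_i$ (since $\dist_g(p,\psi(p)) < \delta < \delta_c = \dist_g(\overline{V_i}, M \setminus U_i)$); and the unique $g$-geodesic from $p$ to $\psi(p)$ remains inside $\overline{S} \cap U_i$, which is exactly where the hypothesized $C^0$-bounds $\|J\| < \lambda C$ and $\|dJ\| < \lambda C$ hold (extended from $S$ to its closure by continuity).

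Using compactness of $(S_i)_pM$, for any two endomorphisms $A_1, A_2$ of $T_pM$ one has
\[ \Bigl|\min_v \omega_p(v, A_1 v) - \min_v \omega_p(v, A_2 v)\Bigr| \;\le\; \max_v |\omega_p(v, (A_1 - A_2)v)|. \]
Since $\phi_i$ is a Darboux chart, $\omega$ is the standard symplectic form in these coordinates and has unit $g_i$-operator norm, so the right-hand side is bounded by $\|A_1 - A_2\|_{g_i}$. Taking $A_1 = (\psi^*J)_p$ and $A_2 = J_p$ reduces the problem to controlling $\|(\psi^*J)_p - J_p\|_{g_i}$. For this I decompose
\[ (\psi^*J)_p - J_p \;=\; [(T_p\psi)^{-1} - \mathrm{Id}]\,J_{\psi(p)}\,T_p\psi \;+\; J_{\psi(p)}\,[T_p\psi - \mathrm{Id}] \;+\; [J_{\psi(p)} - J_p] \]
and estimate each term. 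The first two are controlled by $\|T_p\psi - \mathrm{Id}\|_{g_i} < \delta$ (which, for $\delta < 1/2$, yields $\|(T_p\psi)^{-1} - \mathrm{Id}\|_{g_i} \le 2\delta$) together with the $C^0$-bound $\|J_{\psi(p)}\|_{g_i} < \lambda C$. The third is handled by the fundamental theorem of calculus along the geodesic from $p$ to $\psi(p)$, giving $\|J_{\psi(p)} - J_p\|_{g_i} \le \sqrt{K}\,\lambda C\,\delta$, where $K$ is the $g$-vs-$g_i$ equivalence constant fixed by the atlas. Summing the three contributions produces a bound of the form $\|(\psi^*J)_p - J_p\|_{g_i} \le \lambda C \cdot A(K)\,\delta$ for some explicit constant $A(K) > 0$ independent of $J$ and $\psi$.

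To finish, I set $\delta_{C,\eta} := \min\bigl(1/2,\, \delta_c,\, \mathrm{inj}(g),\, \eta/(C\,A(K))\bigr)$, which forces $\lambda C \cdot A(K)\,\delta < \lambda \eta$ and yields the conclusion. I do not anticipate a substantive obstacle: the argument is essentially a first-order Taylor expansion of the map $J \mapsto \omega(\cdot, J \cdot)$ under pullback by a $C^1$-small diffeomorphism. The only real care needed is the bookkeeping between the auxiliary metric $g$ (which governs distances and supports) and the chart metrics $g_i$ (which govern operator norms), all of which produces only benign factors in the equivalence constant $K$. The structural feature that makes the estimate useful later is the explicit product form $\lambda C \cdot \delta$ of the error: linearity in $\delta$ and homogeneity in $\lambda$ is what allows the iterative rescaling argument in the proof of the preparation lemma.
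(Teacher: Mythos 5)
Your proof is correct and follows essentially the same route as the paper's: reduce to points in the support of $\psi$, bound the variation of $\Upsilon_i$ by the $g_i$-operator norm of $(\psi^*J)_p - J_p$ in a Darboux chart, split this difference by a triangle inequality, and control each piece using the $C^1$-bound on $\psi$, the $C^0$-bound on $J$, the $C^0$-bound on $dJ$ integrated along a short geodesic, and the fixed $g$-vs-$g_i$ equivalence constant. The two cosmetic differences from the paper --- your three-term rather than four-term decomposition, and your direct use of $\|J_{\psi(p)}\|_{g_i} < \lambda C$ via $\psi(p)\in S$ rather than deriving $\|J_{\psi(p)}\| < \lambda C(L+1)$ from $\|J_p\| < \lambda C$ --- are both sound and, if anything, slightly streamline the bookkeeping.
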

\begin{proof} We introduce a small parameter $0 < \delta' < \delta_c/4 < 1/4$, whose value we shall further constrain in the course of the proof, and we set $\delta := \delta'/(1+\delta')$, \emph{i.e.}, $\delta' = \delta/(1-\delta) < 1/3$. \\

\emph{Step 1 - Definition of $B$}. First, we restrict attention to the (open) set 
\[ B_0 := \{ \psi \in  \Diff(M) \, : \, \forall p \in M, \, \dist_g(p, \psi(p)) < \delta  \,  \} \, . \]
Clearly, $\psi \in B_0$ if and only if $\psi^{-1} \in B_0$. Given $\psi \in B_0$ and $p \in \overline{V_i}$, we observe that $\dist_g(\psi(p), M \setminus U_i) > 3 \delta_c /4$, so that $\psi(\overline{V_i}) \subset U_i$. Moreover, if $q \in M$ satisfies $\dist_g(p,q) < \delta_c/4$, then $\dist_g(\psi(p), \psi(q)) < 2 \delta' + \delta_c/4 \le 3 \delta_c/4$, so that $\psi(q) \in U_i$. 

Let's consider some $i$ and work in the chart $(U_i, \phi_i)$, thought of as a subset of $\R^n$ equipped with the standard metric $g_{std}$, so that the tangent spaces at different points of $U_i$ can be identified via the local connection $d$. Consider the (open) set
\[  B_i := \{ \psi \in B_0 \, : \, \forall p \in \overline{V_i}, \, \| T_p \psi - Id \|_{g_{std}} < \delta  \,  \} \, . \] 
Hence, given $p \in \overline{V_i}$ and $\psi \in B_i$, we have
\[  \| T_p \psi - Id \|_{g_{std}} < \delta' \; \text{~and~} \; \| T_{\psi(p)} (\psi^{-1}) - Id \|_{g_{std}} < \delta' \, . \]
We take $B := \bigcap_{i=1}^N B_i$ (which implicitly depends on the value of the parameter $\delta$).\\

\emph{Step 2 - Estimates on $\Upsilon$}. Consider some $\psi \in B$ and put  $\mu := \max_{p \in M} \dist_g (p, \psi(p))$, $S_0 := \overline{\supp(\psi)}$ and $S := \{ q \in M : \exists p \in S_0, \dist_g(p,q) < \mu \}$. (Of course, $\mu < \delta'$.) Consider an almost complex structure $J$ on $M$ and $\lambda > 0$ as in the statement (b) of the Lemma.

Let $p \in (M \setminus S_0) \cap V_i$. We can find a small open set $p \in U \subset (M \setminus S_0) \cap V_i$, so that $\psi|U = id|U$ and $\psi^*J|U = J|U$. Hence $\Upsilon_i[\psi^*J] - \Upsilon_i[J] = 0$ on such $U$.

Let $p \in S_0 \cap V_i$. We observe that the open $g$-ball $U$ of radius $\delta'$ centered at $p$ is contained in $S \cap U_i$, and that the image of this ball under $\psi$ lies inside $U_i$ and $\psi(p) \in U$. We may thus work in the Darboux chart $(U_i, \phi_i)$, which we shall think of as a subset of $\R^n$ equipped with the standard metric $g_{std}$. In particular, $\omega = \omega_{std}$ at every point of $U_i$. We note that $\dist_{g_{std}}(p, \psi(p)) < K \delta' =: L$ for all $p \in \overline{V_i}$. Since $\|dJ\| < \lambda C$ on $S \cap U_i$ and \emph{a fortiori} on $U$, integrating $dJ$ along the $g$-geodesic segment between $p$ and $\psi(p)$, we get $\| J_{\psi(p)} - J_{p} \| < \lambda CL$. This and $\|J_p\| < \lambda C$ yield $\| J_{\psi(p)}\| < \lambda C(L+1)$. For $v \in (S_i)_p M$, we compute
\begin{align}
\notag  \| \omega_{std} \|^{-1} \, \left| \omega_{\psi(p)}(v, (\psi^* J)_p v) - \omega_p(v, J_p v) \right| & \le  \| T_{\psi(p)}(\psi^{-1}) \cdot J_{\psi(p)} \cdot T_p \psi - J_p \| \\
\notag &\le \|  (T_{\psi(p)}(\psi^{-1}) - Id) \cdot J_{\psi(p)} \cdot (T_p \psi - Id) \|  \\
\notag  & \; \quad +  \| (T_{\psi(p)}(\psi^{-1}) - Id) \cdot J_{\psi(p)} \| \\
\notag & \; \quad +   \|   J_{\psi(p)} \cdot (T_p \psi - Id) \| + \|  J_{\psi(p)} - J_p \| \\
\notag & \le \lambda C(L+1) \delta^2 + 2 \lambda C(L+1) \delta + \lambda CL < \lambda C' \delta'
\end{align}
where $C' = C(4K+3)$ (recall that $\delta' \le 1$ by assumption). Let's now require $\delta' < \eta/C'  \| \omega_{std} \| $. Considering $v \in (S_i)_pM$ such that $\omega_p(v, J_p v) = \Upsilon_i[J](p)$ if $\Upsilon_i[\psi^*J](p) \ge \Upsilon_i[J](p)$ or such that $\omega_p(v, (\psi^*J)_p v) = \Upsilon_i[\psi^*J](p)$ if $\Upsilon_i[\psi^*J](p) < \Upsilon_i[J](p)$, we conclude
\[ \left| \Upsilon_i[\psi^* J](p) - \Upsilon_i[J](p) \right| < \lambda \eta \, .  \]
As this is true for all $p \in S_0 \cap V_i$ and for all $i$, this completes the proof.
\end{proof}

\section{Proof of the Preparation lemma} \label{section:prep_lemma}

We prove Theorem \ref{thm2} in this section.

\subsection{Setup} \label{subsec:prep_setup}

Let $(M, \omega)$ be a closed symplectic 4-manifold, $J$ be an $\omega$-tame almost complex structure, $\iota_Z : Z \subset M$ be an embedded closed $J$-holomorphic curve and $W$ an open neighborhood of $Z$ in $M$.

\subsubsection{Linear level} \label{subsubsec:setup_lin_level}
For each $p \in Z$, the data $(T_p M, T_p Z, \omega_p, J_p)$ is of the type studied in Section \ref{section:symplectic splitting}. Hence, as in Section \ref{sec:tion:canonical_metric}, we have the canonical scalar product $g_p$ at each $p \in Z$, thereby obtaining the \emph{canonical metric} $g$ on $T_Z M$ associated to $J$. Since our linear algebraic arguments depend smoothly on the given data, the metric $g$ is smooth. We can extend this bundle metric to a Riemannian metric on the whole of $M$; we fix such an auxiliary metric $g$. This metric is compatible with $\omega$ on $T_Z M$.

At each $p \in Z$, we have the number $N_J(p) \in [0,2)$ given by the norm of the skew part of $J_p$, which vanishes if and only if the tame pair $(\omega_p, J_p)$ is compatible  (see Section \ref{subsubsec:norm}). Allowing $p \in Z$ to vary, we thus obtain a continuous function $N_J : Z \to [0,2)$ whose maximum---which exists by compactness of $Z$---we denote $\|N_J\|$.

For $t \in [0,1/2]$, we consider the linear diffeotopy $\Psi_t : T_Z M \to T_Z M$ associated to $J$ defined in Section \ref{section:linear isotopy}, and we consider the corresponding path $J_t := \Psi_t^* J$ of $\omega$-tame almost complex structures on $T_Z M$. Recall that $J_{1/2}$ is in fact $\omega$-compatible.

In its essence, using Lemmata \ref{lemma:Whitney_diff} and \ref{lem:stability}, the following proof aims to extend $\Psi_t$ (more properly, a time discretization thereof) to an ambient diffeotopy $\psi_t : M \to M$ such that the almost complex structures $\psi_t^*J$ are $\omega$-tame over the whole of $M$.

\subsubsection{Darboux charts} Moving towards the setting of Section~\ref{sec:stability}, we want to construct a suitable finite open atlas  $\{(U_i, \phi_i : U_i \to \R^4)\}_{i=1}^N$ of $M$ by Darboux charts for $\omega$. This open cover will split into two parts: the first part $\{(U_j, \phi_j : U_j \to \R^4)\}_{j=1}^{N'}$ will cover some neighborhood of $Z$ in $M$ in some specific way, whereas the second part will simply be chosen to cover the rest of $M$ and avoid (a compact neighborhood of) $Z$. Only the first part will be of genuine interest to us, and we construct it as follows.

We first consider a finite open cover $(Y_j)_{j=1}^{N'}$ of $Z$ by Darboux charts for $\iota_z^*\omega$ over which the $\omega$-symplectic normal bundle $\nu$ of $Z$ (which is also the $g$-metric normal bundle of $Z$) trivializes both as a $\omega$-symplectic and as a $g$-metric vector bundle, \emph{i.e.}, for each $1 \le j \le N'$, we are given a vector bundle trivialization $\tau_j : \nu|Y_j \to Y_j \times \R^2$ such that $\tau_j^* \omega_{std} = \omega|\nu$ and $\tau_j^* g_{std} = g|\nu$ along the fibers of $\nu$. Roughly speaking, the following lemma proves that the data $\{(Y_j, \tau_j)\}_{j=1}^{N'}$ extend to Darboux charts $\{(U_j, \phi_j)\}_{j=1}^{N'}$ covering a neighborhood of $Z$ in $M$:

\begin{lemma}\label{lem:extension_Darboux}
Consider the data $\{(Y_j, \tau_j)\}_{j=1}^{N'}$ given above. There exist an open refinement $\{Z_j\}_{j=1}^{N'}$ of $\{Y_j\}_{j=1}^{N'}$ covering $Z$ and Darboux charts $\{(U_j, \phi_j : U_j \to \R^4)\}_{j=1}^{N'}$ for $\omega$ covering a neighborhood of $Z$ in $M$ such that $U_j \cap Z = Z_j$ and $T\phi_j | (\nu | Z_j) = \tau_j | Z_j$.
\end{lemma}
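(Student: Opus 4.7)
The plan is to apply Weinstein's symplectic neighborhood theorem locally over each $Y_j$, using an auxiliary model symplectic form on the total space of $\nu$ built from the given $\phi_j^Z$ and $\tau_j$.

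First, by paracompactness of $Z$, choose an open refinement $\{Z_j\}_{j=1}^{N'}$ of $\{Y_j\}$ covering $Z$ with $\overline{Z_j} \subset Y_j$. Writing $\phi_j^Z : Y_j \to \R^2$ for the given Darboux chart, $\pi : \nu \to Z$ for the bundle projection, and $\mathrm{pr}_2 : Y_j \times \R^2 \to \R^2$ for the projection to the fiber factor, define
\[ \chi_j : \nu|Y_j \to \R^4 = \R^2 \times \R^2, \qquad v \mapsto \big(\phi_j^Z(\pi(v)),\, \mathrm{pr}_2(\tau_j(v))\big). \]
This is a diffeomorphism of $\nu|Y_j$ onto $\phi_j^Z(Y_j) \times \R^2$; set $\Omega_j := \chi_j^*\omega_{std}$, a symplectic form on $\nu|Y_j$.

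Observe that $Z_j$ (as the zero section) is a symplectic submanifold of both $(\nu|Y_j, \Omega_j)$ and $(M, \omega)$, with induced form $\iota_Z^*\omega$ in both cases. Its symplectic normal bundle inside $(\nu|Y_j, \Omega_j)$ is the vertical subbundle, canonically identified with $\nu|Z_j$, endowed with the fiberwise symplectic form pulled back from $\omega_{std}$ via $\tau_j$. By the defining property of $\tau_j$, this agrees with the symplectic normal bundle of $Z_j$ inside $(M, \omega)$, namely $\nu|Z_j$ equipped with the fiberwise restriction of $\omega$. Hence $\mathrm{id}_{\nu|Z_j}$ is a symplectic bundle isomorphism between the two normal bundles.

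Applying Weinstein's symplectic neighborhood theorem in the form that prescribes the first-order behavior along the submanifold (see e.g.\ McDuff-Salamon, Thm.~3.30), we obtain a symplectomorphism
\[ \Theta_j : \mathcal{O}_j \subset (\nu|Y_j, \Omega_j) \longrightarrow \Theta_j(\mathcal{O}_j) \subset (M, \omega) \]
from an open neighborhood $\mathcal{O}_j$ of $\overline{Z_j}$ in $\nu$ onto an open neighborhood of $Z_j$ in $M$, satisfying $\Theta_j|_{Z_j} = \mathrm{id}_{Z_j}$ and $T\Theta_j|_{\nu|Z_j} = \mathrm{id}_{\nu|Z_j}$. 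Setting $U_j := \Theta_j(\mathcal{O}_j)$ and $\phi_j := \chi_j \circ \Theta_j^{-1} : U_j \to \R^4$, we obtain $\phi_j^*\omega_{std} = (\Theta_j^{-1})^*\Omega_j = \omega$, so $\phi_j$ is a Darboux chart; for $p \in Z_j$ and $v \in \nu_p$, we compute $T_p\phi_j(v) = T_p\chi_j(v) = \tau_j(v)$ (landing in the second $\R^2$ factor of $\R^4$), confirming $T\phi_j|_{\nu|Z_j} = \tau_j|_{Z_j}$. After shrinking $\mathcal{O}_j$ to a tubular neighborhood of $\overline{Z_j}$ we also have $U_j \cap Z = Z_j$, and the $U_j$'s cover a neighborhood of $Z$ since the $Z_j$'s cover $Z$. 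The main technical input is the derivative-controlling form of Weinstein's theorem; without it, the underlying Moser isotopy could a priori shear the normal bundle and spoil the match with $\tau_j$.
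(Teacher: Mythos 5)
Your proposal is correct and follows essentially the same route as the paper: both build a local model $(\nu|Y_j, \Omega_j)$ isomorphic to the standard $Y_j \times \R^2 \subset \R^4$ via $\phi_j^Z$ and $\tau_j$, observe that $\tau_j$ (equivalently, the identity on $\nu|Z_j$ after transporting to the model) is a symplectic normal bundle isomorphism, and apply Weinstein's symplectic neighborhood theorem to produce $\Theta_j$ and hence $\phi_j = \chi_j \circ \Theta_j^{-1}$. You also correctly flag the key technical subtlety that one needs the version of Weinstein's theorem controlling the first-order jet along the submanifold — exactly the point the paper makes in Remark~\ref{rem:moser_trick}.
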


\textit{Proof}. We may think of the Darboux chart $Y_j$ as an open subset of $\R^2$ equipped with the standard symplectic form. We equip the set $M' := Y_j \times \R^2 \subset \R^4$ with the standard symplectic form and with the (compatible) standard metric. Of course, $Y_j \times \R^2$ can also be understood as the (symplectic and orthogonal) normal vector bundle $\nu'$ of $Y_j \cong Y_j \times \{0\}$ in $M'$. The trivialization $\tau_j$ can then be thought of as an isomorphism between the symplectic normal bundles $\nu$ of $Y_j$ in $M$ and $\nu'$ of $Y_j$ in $M'$ covering the symplectic diffeomorphism $id : Y_j \to Y_j$.

The result then follows from Weinstein's Symplectic neighborhood theorem (see \emph{e.g.} \cite[Theorem 3.4.10]{MS}). To apply this theorem, since the sets $Y_j$ are not compact submanifolds, we only need to shrink them a little to get compact submanifolds with boundaries $\overline{Z_j}$, whose interiors $Z_j$ we may require to still form an open cover of $Z$.

\hfill $\square$

\begin{remark} \label{rem:moser_trick}
In \cite{MS}, the Symplectic neighborhood theorem (Theorem 3.4.10) is deduced from the Moser isotopy lemma (Lemma 3.2.1). However, by itself, this lemma does not imply the part of the theorem which we use in the proof of Lemma \ref{lem:extension_Darboux} to get $T\phi_j | (\nu | Z_j) = \tau_j | Z_j$. Rather, it follows from the stronger equation (3.2.5) established in the proof of the Moser isotopy lemma.\eoe
\end{remark}

We now have the atlas  $\{(U_i, \phi_i : U_i \to \R^4)\}_{i=1}^N$ of $M$ by Darboux charts for $\omega$. We note that for those charts $U_j$ ($1 \le j \le N'$) that cover $Z$, the Darboux coordinates at points of $Z_j = Z \cap U_j$ determine a unitary basis as in Section~\ref{subsubsec:unitary-basis}, thereby allowing us to use the estimates proved in Section \ref{section:preliminary considerations}. We also fix some open refinement $\{(V_i)\}_{i=1}^N$ of $\{(U_i)\}_{i=1}^N$.

The atlas also determine the functionals $\Upsilon_i$ defined in Section \ref{sec:stability}. We thus have two criteria to test the tameness of a pair $(\omega, J')$ on $Z$ (assuming $Z$ is a $J'$-curve): either as the positivity of the functions $\Upsilon_i[J']$ or as the positivity of the function $2 - N_{J'}$. In view of Equation \ref{eq:tameness_N}, we see that for $p \in Z_j$, we have $\Upsilon_j[J'](p) \ge 1 - N_{J'}(p)/2$. 

\subsubsection{Whitney's extension theorem} 
We shall need the following "controlled" version of Whitney's extension theorem, whose proof is postponed to Section~\ref{section:Whitney_ext}: 

\begin{prop}[Whitney extension for diffeomorphisms]\label{prop:Whitney_diff_2}
Let $\Phi : T_ZM \to T_ZM$ be a bundle automorphism over the identity map $id : Z \to Z$ defining a holonomic $1$-jet data along $Z$ for diffeomorphisms of $M$, \emph{i.e.}, whose restriction $\left. \Phi \right|_{TZ} : TZ \to TZ$ is the identity. There exist constants $\epsilon_0 > 0$ and $\kappa \ge 1$ such that the following holds:

If $\| \Phi - Id \|_{C^0(Z \cap U_j)} < \epsilon_1$ for all $1 \le j \le N'$ and some $\epsilon_1 < \epsilon_0$, then there exists a diffeomorphism $\phi$ of $M$ which is diffeotopic to $id_M$ and whose differential along $Z$ equals $\Phi$. Moreover, $\phi$ can be chosen with $\max_{p \in M} \dist_g(p, \phi(p))$ as small as desired, and such that $\| d\phi - Id  \|_{C^0(\overline{V}_j)} < \kappa \epsilon_1$ for all $1 \le j \le N$. Furthermore, the diffeomorphism $\phi$ can be chosen such that its $2$-jets along $Z$ coincide with any given holonomic $2$-jet extension of $\Phi$ along $Z$.
\end{prop}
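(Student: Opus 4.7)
The plan is to build $\phi$ explicitly in a tubular neighborhood of $Z$ by cutoff-interpolating a model diffeomorphism realizing $\Phi$ near $Z$ with the identity outside, balancing the cutoff radius $R$ against the smallness parameter $\epsilon_1$ to obtain all the advertised bounds. I would first fix the tubular neighborhood $\Theta : B_{R_0}(\nu) \hookrightarrow M$ given by the $g$-normal exponential, together with a metric connection on the normal bundle $\nu := (TZ)^{\perp_g}$. In the $g$-orthogonal splitting $T_ZM = TZ \oplus \nu$, the condition $\Phi|_{TZ} = \id$ forces the block form $\Phi = \begin{pmatrix} I & B \\ 0 & D \end{pmatrix}$ with $B : \nu \to TZ$ and $D : \nu \to \nu$, and the hypothesis $\|\Phi - \id\|_{C^0} < \epsilon_1$ immediately yields $\|B\|_{C^0}, \|D - I\|_{C^0} = O(\epsilon_1)$.

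Next, letting $\chi : [0,\infty) \to [0,1]$ be a standard bump with $\chi \equiv 1$ on $[0,1/2]$ and $\chi \equiv 0$ on $[1,\infty)$, I would define in tubular coordinates $(z, v)$ with $v \in \nu_z$:
\[
\phi(z, v) := \left( \exp^Z_z(\chi(|v|/R)\, B_z v) \, , \; P_\gamma \left[ v + \chi(|v|/R)\, (D_z - I) v \right] \right)
\]
inside $\{|v| < R\}$, extending by $\id_M$ outside, where $\gamma = \gamma(z, v)$ is the $Z$-geodesic from $z$ to $\exp^Z_z(\chi\, B_z v)$ and $P_\gamma$ denotes parallel transport in $\nu$ along it. A direct first-order calculation at $v = 0$, using $\chi(0) = 1$ together with the first-order triviality of $\exp^Z$ and $P_\gamma$ along $Z$, confirms $T\phi|_{(z,0)} = \Phi_z$. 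The displacement $|\phi(p) - p|_g \le C\,\epsilon_1 R$ is elementary and becomes arbitrarily small by shrinking $R$. The bound on $\|d\phi - \id\|_{C^0(\overline V_j)}$ will follow from decomposing its contributions into three types: (i) zeroth-order terms from $B$ and $D - I$, of size $O(\epsilon_1)$; (ii) terms involving $dB$ or $dD$ multiplied by $|v| \le R$, of size $O(R)$ with constants depending on $\Phi$; and (iii) cutoff-derivative terms $(\chi'/R) \cdot O(\epsilon_1 R)$ in which the two factors of $R$ cancel, leaving an $R$-independent $O(\epsilon_1)$.

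Shrinking $R$ sufficiently relative to the fixed derivatives of $\Phi$ absorbs (ii) into the universal bound $\kappa\epsilon_1$. For $\epsilon_0$ chosen small enough that $\kappa\epsilon_0 < 1$, the $C^1$-inverse function theorem guarantees $\phi$ is a diffeomorphism, and the isotopy to $\id_M$ is obtained by running the same construction with $s\Phi + (1-s)\id$ in place of $\Phi$ for $s \in [0,1]$. The prescribed 2-jet extension is then matched by adding a further cutoff-localized correction $\chi(|v|/R)\, Q(z)(v, v)$, where $Q(z)$ is the $T_zM$-valued symmetric bilinear form on $\nu_z$ realizing the difference between the desired 2-jet and that of $\phi$; being $O(|v|^2)$ this correction does not perturb the 1-jet, and its $C^1$ contribution $O(R\,\|Q\|_{C^0})$ is absorbed by further shrinking $R$. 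The main obstacle is the simultaneous management of $R$: the $C^0$-displacement and 2-jet absorption demand $R$ small, while the cutoff-derivative factor $\chi'/R$ would naively blow up the $C^1$-bound as $R \to 0$; the argument succeeds precisely because this factor always multiplies quantities of size $O(\epsilon_1 R)$, so the $R$'s cancel and $\kappa$ remains a universal constant independent of any further shrinking of $R$.
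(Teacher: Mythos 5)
Your construction is correct, but it follows a genuinely different route from the paper. The paper reduces the diffeomorphism‐extension problem to a problem about extending sections of a vector bundle, using the graph‐in‐$M\times M$ / normal‐bundle‐of‐the‐diagonal trick (Lemma \ref{lemma:Whitney_diff} reformulated as Lemma \ref{lem:Whitney_sec}), then proves the section version by invoking Whitney's classical extension theorem in local charts, patching with a partition of unity, and performing a cutoff argument for the $C^1$ control. You instead write down an explicit diffeomorphism in tubular coordinates, using the block decomposition $\Phi = \begin{pmatrix} I & B \\ 0 & D\end{pmatrix}$ relative to the $g$‐orthogonal splitting $T_Z M = TZ \oplus \nu$, the intrinsic exponential map of $Z$, parallel transport in the normal bundle, and a radial bump $\chi(|v|/R)$. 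The two arguments nonetheless share the same essential quantitative mechanism: the cutoff derivative contributes a factor of order $1/R$ that always multiplies quantities of size $O(\epsilon_1 R)$ (your item (iii)), which is exactly the cancellation exploited in Step~3 of the paper's proof of Lemma \ref{lem:Whitney_sec}, where $2/R$ multiplies $2Kr(p) \le 2KR$. Your approach buys self‐containedness and an explicit formula, avoiding Whitney's theorem entirely; the paper's buys modularity and offloads the local analysis (including the $2$‐jet matching, which comes for free from the Taylor‐coefficient construction in Step~1) onto the classical result, at the cost of the diagonal‐bundle bookkeeping. Two small points worth making explicit in your write‐up: (1) the quadratic correction $\chi(|v|/R)\,Q(z)(v,v)$ must be split into its $TZ$ and $\nu$ components before being inserted into the arguments of $\exp^Z_z$ and $P_\gamma$ respectively, and $Q$ must compensate not only the prescribed normal‐normal $2$‐jet but also the second‐order contributions of $\exp^Z$ and $P_\gamma$ themselves; (2) the constants in your terms (ii) depend on the $C^1$‐size of $\Phi$, which is harmless here since $R$ may depend on $\Phi$ and $\epsilon_1$, but it is worth stating so that one sees clearly that $\epsilon_0$ and $\kappa$ remain independent of $\Phi$.
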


\begin{remark}\label{rem:isotopy_2}
As explained in Remark \ref{rem:isotopy}, the diffeotopy between $\phi$ and $id_M$ may be taken so as to restrict to the identity on $Z$ at all times.\eoe
\end{remark}

\subsubsection{Thin neighborhood $W$}\label{subsubsec:W}  We fix a relatively compact open neighborhood $W$ of $Z$ as in the statement of Theorem \ref{thm2}. We may assume that $W$ is contained within the union of the charts $V_j$ ($1 \le j \le N'$) that cover $Z$ and that $W$ avoids the charts $U_j$ ($j > N'$).

\subsection{Choice of parameters} 
We select appropriate values of the free parameters that appear in the Lemmata that we shall use later in the proof.

\subsubsection{Choice of $\eta$ in Lemma \ref{lem:stability} }\label{subsubsec:eta}  We consider the path $J_t$ of $\omega$-tame almost complex structures on $T_Z M$ from paragraph \ref{subsubsec:setup_lin_level}. We introduce a small parameter $\eta > 0$ so that $\eta < (1 - \|N_{J}\|/2)/2$, or equivalently so that $\|N_{J}\| < 2(1- 2\eta)$, which is possible since $J = J_0$ is $\omega$-tame. We recall from Section \ref{section:linear isotopy} that the function $t \mapsto \| N_{J_t} \| := \max_{p \in Z} N_{J_t}(p)$ is decreasing for $t \in [0,1/2]$, so that $\|N_{J_t}\| < 2(1- 2\eta)$ for all $t \in [0, 1/2]$.

\subsubsection{Choice of $C$ in Lemma \ref{lem:stability} }\label{subsubsec:C} We proceed here to fix an appropriately value for the constant $C$ in Lemma \ref{lem:stability}. First, we can certainly pick $C$ so large that
\[  \| J_0\|_{C^0(W \cap U_j)} < C \text{~and~} \| dJ_0\|_{C^0(W \cap U_j)} < C \]
for all $1 \le j \le N'$.

Secondly, let's imagine that the linear diffeotopy $\Psi_t$ of $T_Z M$ extends to an ambient diffeotopy $\widetilde{\psi}_t$ of $M$, and consider the corresponding almost complex structures $\widetilde{J}_t = \widetilde{\psi}_t^* J$. It is clear that the quantities $(\widetilde{J}_t)_{U_j}$ and $(d\widetilde{J}_t)_{U_j}$ are continuous functions on $U_j \times [0,1/2]$, and that their values along $Z$ are expressible in terms only of the $1$-jets of $J$ along $Z$ and of the $2$-jets of $\widetilde{\psi}_t$ along $Z$ (\emph{i.e.}, of some $2$-jet extension of $\Psi_t$); for instance, we know that $\widetilde{J}_t|Z = J_t|Z = \Psi_t^* J|Z$. We could then select $C$ to be larger than all of the norms $\| \widetilde{J}_t\|_{C^0(Z \cap U_j)}$ and $\| d\widetilde{J}_t\|_{C^0(Z \cap U_j)}$ for $1 \le j \le N'$ and $t \in [0,1/2]$.

Since we have not proved the existence of any such extensions $\widetilde{\psi}_t$ at this point, we shall instead select an arbitrary $2$-jet extension of $\Psi_t$ continuous over $Z \times [0,1/2]$, e.g., we could impose that the (well-defined) normal-normal part of the $2$-jet extension of the $1$-jet $\Psi_t$ (which is the only part of the $2$-jet extension which is not already determined by the $1$-jet $\Psi_t$) to vanish identically on $Z \times [0,1/2]$. Using the above expressions---that could be made explicit, but we only need their existence---to \emph{define} the quantities $J_t|Z$ (already given in fact by $\Psi_t^* J|Z$) and $dJ_t$ along $Z$ for all $t \in [0,1/2]$, it becomes clear that we can take $C$ so large that
\[  \| J_t\|_{C^0(Z \cap U_j)} < C \text{~and~} \| dJ_t\|_{C^0(Z \cap U_j)} < C\]
for all $1 \le j \le N'$ and all $t \in [0,1/2]$.

\subsubsection{Choice of $\epsilon$ in Lemma \ref{lem:newtechnical}} \label{subsubsec:epsilon} On the one hand, given the value $C$ that we just fixed and the value $\eta$ that we selected earlier, Lemma \ref{lem:stability} determines a parameter $\delta := \delta_{C, \eta} > 0$ and a corresponding open set $B = B_{C, \eta} \subset \Diff(M)$. On the other hand, under our current setup, Proposition \ref{prop:Whitney_diff_2} determines two parameters $\epsilon_0 > 0$ and $\kappa \ge 1$. We pick $0 < \epsilon < \min\{ \delta/\kappa,  \epsilon_0 \}$ (which is implicitly related to $\eta$).

\subsubsection{Choice of time partition}\label{subsec:Choice_of_partition} We partition the time-interval $[0,1/2]$ into $d$ intervals 
\[0 = t_0 < t_1 < \dots < t_d = 1/2\]
such that
\[ t_{i+1} - t_i <  \dfrac{\epsilon}{\sqrt{2}} \left( \frac{1}{\| N_J \|} - \frac{1}{2}  \right) \]
for all $0 \le i \le d-1$.

For $0 \le i \le d$, we set $\Psi_i := \Psi_{t_{i}}$ and $J_i = J_{t_i}$ for simplicity. It is also convenient to introduce the notation $\Phi_i = \Psi_{i+1} \circ \Psi_i^{-1}$ for $0 \le i \le d-1$; we note that each $\Phi_i$ comes with some induced $2$-jet extension. In view of Lemma \ref{lem:newtechnical}, for all $0 \le i \le d-1$, we have $\| \Phi_i  - Id \|_{C^0(Z)} < \epsilon$.

\subsection{Proof of Theorem~\ref{thm2}} 
The proof proceeds by induction over $i \in \{0, 1, \dots, d-1\}$.

\subsubsection{Induction hypothesis} We assume that for some $0 \le i < d$, we have constructed an ambient smooth diffeomorphism $\psi_i$ of $M$ whose $2$-jet along $Z$ coincides with the given $2$-jet extension of $\Psi_i$, and such that the globally defined almost complex structure $J_i := \psi_i^* J$ is everywhere $\omega$-tame. By assumption, this statement holds for $i=0$, since we can simply take $\psi_0 = id_M$.

\subsubsection{Induction step}  We focus our attention to within a thin open neighborhood $W_i$ of $Z$ inside the open set $W$ from Step \ref{subsubsec:W}, selected according to the following two constraints. First, along $Z$, we know that $\|(J_i)|_p\|_{g_j} < C$ and $\|(dJ_i)_p\|_{g_j} < C$ for all $p \in Z$ and $1 \le j \le N'$, where we used the notations $g_j := \tau_j^* g_{std}$. Since those norms depend continuously on $p \in M$, we can select $W_i$ so thin that we have $\|(J_i)|_p\|_{g_j} < C$ and $\|(dJ_i)_p\|_{g_j} < C$ for all $p \in W_i$ and $1 \le j \le N'$. Secondly, along $Z$, we know that $\| N_{J_i} \|  < 2 (1-2\eta)$, hence that $\Upsilon_j[J_i] > 2 \eta$ along $Z \cap U_j$ for all $1 \le j \le N'$. By continuity of $J_i$, we may select $W_i$ so thin that $\Upsilon_j[J_i] > \eta$ on $W_i \cap U_j$ for all $1 \le j \le N'$.

Given a subset $X \subset M$ and $\theta > 0$, write $X^{\theta} := \{ q \in M \, : \, \dist_g(q, X) < \theta\}$. Let's take $0 < \theta_i < \delta$ so small that $Z^{2\theta_i}  \subset W_i$. We set $Y_i := Z^{\theta_i}$, so that $Y_i^{\theta_i} \subseteq Z^{2 \theta_i}$.

We consider the bundle morphism $\Phi_i : T_Z M \to T_Z M$, which satisfies $\| \Phi_i - Id\|_{C^0(Z)} < \epsilon$, along with its corresponding $2$-jet extension. By choice of $\epsilon$ in Step \ref{subsubsec:epsilon} and in view of Proposition \ref{prop:Whitney_diff_2}, $\Phi_i$ and its $2$-jet extension extend to an ambient diffeomorphism $\phi_i \in \Diff(M)$ with compact support within $Y_i$, such that $\max_{p \in M} \dist_g(p, \phi_i(p)) < \theta_i$ and such that $\| d\phi_i - Id  \|_{C^0(\overline{V}_j)} < \delta$ for all $1 \le j \le N$. This means that $\phi_i$ belongs to the set $B$ obtained in Step \ref{subsubsec:epsilon}.

We set $\psi_{i+1} := \phi_i \circ \psi_i$ and $J_{i+1} := \phi_i^* J_i = \psi_{i+1}^* J$. It remains to prove that the almost complex structure $J_{i+1}$ is $\omega$-tame over the whole of $M$. Since $J_{i+1} = J_i$ on the complement of $W_i$, it suffices to prove that $J_{i+1}$ is $\omega$-tame on each $V_j$ with $1 \le j \le N'$.

Let's set $S_i := (\overline{\supp(\phi_i)})^{\theta_i}$. Hence $S_i \subseteq Y_i^{\theta_i} \subseteq Z^{2\theta_i} \subseteq W_i$ and thus
\[ \| J_i \|_{C^0(S_i \cap U_j)} < C \text{~and~} \| dJ_i \|_{C^0(S_i \cap U_j)} < C \, \]
for all $1 \le j \le N'$.

It follows that, with our choices of $\eta$ and $C$ from Steps \ref{subsubsec:eta} and \ref{subsubsec:C}, Lemma \ref{lem:stability} is applicable to the case of $J_i$ and $\phi_i$ (with $\lambda = 1$). Hence for all $1 \le j \le N'$, we have
\[  \left| \Upsilon_j[J_{i+1}] - \Upsilon_j[J_i]   \right|_{C^0(V_j)} < \eta \, . \]
Consequently, for all $1 \le j \le N'$ and all $p \in W_i \cap V_j$, we have
\[   \Upsilon_j[J_{i+1}](p) \ge \Upsilon_j[J_i](p) - \left| \Upsilon_j[J_{i+1}](p) - \Upsilon_j[J_i](p)   \right| > \eta - \eta = 0 \, , \]
proving that $J_{i+1}$ is $\omega$-tame at $p$. Since $W_i$ is covered by the $V_j$'s with $1 \le j \le N'$, this proves that $J_{i+1}$ is $\omega$-tame everywhere on $W_i$.

\subsubsection{Conclusion} In the end of the induction, we obtain a diffeomorphism $\psi = \psi_d$ (isotopic to the identity) and an $\omega$-tame almost complex structure $J' = J_d = \psi^* J$ whose restriction to $T_Z M$ is $\omega$-compatible. This concludes the proof of the Preparation lemma modulo the proof of Proposition~\ref{prop:Whitney_diff_2} that is given in the next section.

\section{Whitney extension-type results}\label{section:Whitney_ext} 

The Whitney extension-type result used in the proof of the preparation lemma --- namely Proposition \ref{prop:Whitney_diff_2} --- is a version of the more general Lemma~\ref{lemma:Whitney_diff} below reformulated within the specific setup of Section~\ref{section:prep_lemma}. (See Remark~\ref{rem:explanations_Whitney} for an explanation of the transition from Lemma~\ref{lemma:Whitney_diff} to Proposition~\ref{prop:Whitney_diff_2}.) Although these kinds of extension results may be well-known to experts, we have not been able to find anywhere in the literature statements giving sufficient control both over the $C^1$ norms and over the supports of the extended diffeomorphisms. For this reason, we provide a proof of Lemma~\ref{lemma:Whitney_diff} in this section.

We consider a closed $n$-manifold $M$ and a closed embedded $k$-submanifold $Z \subset M$ of positive codimension. Given a vector bundle $\nu : E \to M$, we shall denote by $\nu_Z : E_Z \to Z$ its pullback or restriction to $Z$. We say that a $r$-jet data along $Z$ (for some type of smooth maps from $M$ to some other manifold) is \emph{holonomic} if it satisfies suitable compatibility relations for it to stand a chance to be the $r$-jet of some genuine smooth map defined near $Z$, namely that the parts of the $r$-jet involving the tangent directions to $Z$ are obtained as derivatives (along $Z$) of lowest order parts of the $r$-jet (so that only the parts of the $r$-jet purely transverse to $Z$ can be chosen freely).

Our goal in this section is to establish some versions of Whitney extension theorem on manifolds, that is, roughly speaking, to find sufficient conditions for some $1$-jets data along $Z$ to be the differential along $Z$ of some function on $M$. The main result of this section is:

\begin{lemma}[Whitney extension for diffeomorphisms]\label{lemma:Whitney_diff}
Let $\Psi : T_ZM \to T_ZM$ be a bundle automorphism over the identity map $id : Z \to Z$ defining a holonomic $1$-jet data along $Z$ for diffeomorphisms of $M$, \emph{i.e.}, whose restriction $\left. \Psi \right|_{TZ} : TZ \to TZ$ is the identity. The following holds:

If $\Psi$ is sufficiently close to the identity bundle morphism, there exists a diffeomorphism $\psi$ of $M$ which is diffeotopic to $id_M$ and whose differential along $Z$ equals $\Psi$. Moreover, $\psi$ can be chosen to be ‘proportionally’ sufficiently $C^1$-close to the identity, arbitrarily $C^0$-close to the identity and supported in an arbitrary open set $U \supset Z$. Furthermore, the diffeomorphism $\psi$ can be chosen such that its $2$-jets along $Z$ coincide with any given holonomic $2$-jet extension of $\Psi$ along~$Z$.
\end{lemma}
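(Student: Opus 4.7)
My approach is to write down an explicit extension $\psi$ inside a tubular neighborhood of $Z$, cut it off by a bump function to contain the support in any given $U \supset Z$, and deduce the diffeomorphism property from the openness of $\Diff(M)$ in the $C^1$-Whitney topology.

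Fix an auxiliary Riemannian metric on $M$ and use its exponential map to identify a tubular neighborhood of $Z$ with an open neighborhood of the zero section in the normal bundle $\nu \to Z$, so as to work in fiber coordinates $(z,n) \in \nu$ in which $Z = \{n=0\}$. The metric splitting $T_ZM = TZ \oplus \nu$ puts $\Psi$ in the block form
\[ \Psi_z = \begin{pmatrix} I & B_z \\ 0 & A_z \end{pmatrix}, \qquad B_z : \nu_z \to T_zZ, \quad A_z : \nu_z \to \nu_z, \]
where the vanishing lower-left block is forced by $\Psi|_{TZ}=\mathrm{id}$. Of any holonomic $2$-jet extension of $\Psi$ along $Z$, the tangent-tangent and tangent-normal parts are tautologically determined by $\Psi$ itself (as derivatives of $\Psi$ along $Z$), and the only freely prescribed datum is the normal-normal second derivative, a symmetric bilinear $Q_z : \nu_z \times \nu_z \to T_zM$.

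Pick a smooth cutoff $\rho : [0,\infty) \to [0,1]$ with $\rho \equiv 1$ on $[0,1/4]$ and $\rho \equiv 0$ on $[1,\infty)$. For a small parameter $\epsilon > 0$, define inside the tubular neighborhood
\[ \psi(z,n) \,:=\, (z,n) + \rho\!\left(\tfrac{|n|^2}{\epsilon^2}\right)\Bigl[\,(B_z n,\,(A_z-I)n) + \tfrac{1}{2}Q_z(n,n)\,\Bigr], \]
and extend $\psi$ by the identity outside. As $\rho$ is constant near $n=0$, all its derivatives vanish along $Z$, so a direct computation confirms that the $1$- and $2$-jets of $\psi$ along $Z$ coincide with $\Psi$ and with the prescribed $2$-jet. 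The derivatives of the cutoff scale as $1/\epsilon$ on the annulus $|n| \sim \epsilon$, but the bracket that they multiply is of order $\epsilon\,\|\Psi - I\| + \epsilon^2\,\|Q\|$ there; these singular contributions therefore cancel and one obtains bounds of the schematic form
\[ \|d\psi - \mathrm{id}\|_{C^0(M)} \,\lesssim\, \|\Psi - I\|_{C^1(Z)} + \epsilon\,\|Q\|_{C^1(Z)}, \qquad \|\psi - \mathrm{id}\|_{C^0(M)} \,\lesssim\, \epsilon\bigl(\|\Psi - I\|_{C^0(Z)} + \epsilon\,\|Q\|_{C^0(Z)}\bigr). \]

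If $\Psi$ is $C^1$-close enough to the identity bundle morphism and $\epsilon$ is then shrunk sufficiently, $\psi$ lies in any prescribed $C^1$-Whitney neighborhood of $\mathrm{id}_M$, so the standard fact that $\Diff(M)$ is $C^1$-open in $C^\infty(M,M)$ guarantees $\psi \in \Diff(M)$. Shrinking $\epsilon$ further simultaneously places the support of $\psi - \mathrm{id}$ inside the given $U$ and makes $\|\psi - \mathrm{id}\|_{C^0}$ as small as desired. Replacing $(A_z, B_z, Q_z)$ by $(tA_z + (1-t)I,\, tB_z,\, tQ_z)$ in the displayed formula yields a smooth diffeotopy $\psi_t$ from $\psi_0 = \mathrm{id}_M$ to $\psi_1 = \psi$ through diffeomorphisms, as the same $C^1$-estimate applies uniformly in $t \in [0,1]$. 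The main subtlety I anticipate lies in this $C^1$-estimate: one must honestly verify that the naively singular $1/\epsilon$ derivative of the cutoff is absorbed by the linear/quadratic vanishing of the bracketed expression at $n=0$, so that the $C^1$-control is dictated by the closeness of $\Psi$ to $\mathrm{id}$ and decouples from the freedom to shrink the support radius $\epsilon$.
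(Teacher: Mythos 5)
Your proposal takes a genuinely different route from the paper: rather than pass to sections of the normal bundle of the diagonal $\Delta \subset M \times M$ via the Riemannian exponential and then invoke Whitney's classical extension theorem (which is really overkill here, since only $2$-jets need to be matched), you write down the explicit degree-$2$ Taylor polynomial of the desired diffeomorphism and cut it off. This is cleaner in principle, the jet-matching and cutoff calculus are correct, the uniform-in-$t$ remark correctly produces the diffeotopy, and the $C^1$-openness of $\Diff(M)$ closes the argument.

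There is, however, a genuine gap that you did not flag. The displayed formula
\[
\psi(z,n) \,=\, (z,n) + \rho\!\left(\tfrac{|n|^2}{\epsilon^2}\right)\Bigl[\,(B_z n,\,(A_z-I)n) + \tfrac{1}{2}Q_z(n,n)\,\Bigr]
\]
does not make invariant sense on the total space of $\nu$. The fibrewise term $(A_z-I)n$ lives in $\nu_z$ and can be added to $n$, but $B_z n$ lives in $T_z Z$: there is no intrinsic way to ``add'' a tangent vector of $Z$ to the base point $z$, and you need $B_z\neq 0$ precisely because $\Psi$ is designed to cancel the skew part $B$ of $J$. To repair this you must either (a) work in local charts of $Z$ that trivialize $\nu$ and patch the resulting local extensions with a $z$-partition of unity (which is essentially what the paper does in its proof of Lemma~\ref{lem:Whitney_sec}, and is harmless because the partition's derivatives are independent of the fiberwise cutoff parameter $\epsilon$), or (b) interpret the displacement via a connection and an exponential, in which case the Christoffel symbols contribute quadratic corrections at order $|n|^2$ that must be absorbed into $Q_z$ before the claimed $2$-jet match is legitimate. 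Either fix is routine but must appear; as written, the single global formula is not well-defined, and the subtlety you anticipated (the $1/\epsilon$ singularity of the cutoff derivative) is actually fine.
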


\begin{remark}\label{rem:isotopy}
Recall that for any $\psi$ sufficiently $C^1$-close to the identity, $\psi$ can be interpreted as a (sufficiently $C^1$-small) smooth section of $T^*\Delta$ where $\Delta \subset M \times M$ denotes the diagonal. By a linear interpolation between this section and the zero section of $T^*\Delta$, we obtain a diffeotopy between $\psi$ and the identity. If $\psi$ restricts to the identity on $Z \subset M$, so does this whole diffeotopy.\eoe
\end{remark}

To prove the previous lemma, we shall show that it can be reformulated as a particular instance of the following :

\begin{lemma}[Whitney extension for sections]\label{lem:Whitney_sec}
Let $\nu : E \to M$ be a vector bundle of rank $q$ and let $F : T_Z M \to E_Z$  be a bundle morphism over the identity $id : Z \to Z$ defining a holonomic $1$-jet data along $Z$ for sections of $\nu$, \emph{i.e.}, which vanishes on $TZ \subset T_Z M$. The following holds:

There exists a smooth section $f : M \to E$ of $\nu$ which vanishes on $Z$ and whose differential along $Z$ is $F$. Moreover, $f$ can be chosen to be arbitrarily $C^0$-small and supported in an arbitrary open set $U \supset Z$. Besides, if $F$ is close to the zero morphism, then $f$ can be taken ‘proportionally’ $C^1$-small. Furthermore, the section $f$ can be chosen such that its $2$-jets along $Z$ coincide with any given holonomic $2$-jet extension of $F$ along $Z$.
\end{lemma}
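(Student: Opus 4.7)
The plan is a tubular-neighborhood construction followed by a cutoff. Fix an auxiliary Riemannian metric on $M$ and let $\pi : N \to Z$ denote the normal bundle of $Z$; for $\delta > 0$ small enough, the exponential map restricts to a diffeomorphism $\exp : N_{<\delta} \to T_\delta$ from the open subbundle $N_{<\delta} := \{ v \in N : |v| < \delta\}$ onto an open tubular neighborhood $T_\delta \subset U$. Fix also a linear connection on $E$, yielding isomorphisms $\tau_q^v : E_q \to E_{\exp_q(v)}$ by parallel transport along the geodesic radius $s \mapsto \exp_q(sv)$, depending smoothly on $(q,v)$ with $\tau_q^0 = \id$. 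Since $F|_{TZ} = 0$ by holonomicity, $F$ factors through the orthogonal projection $T_Z M \to N$ as a bundle morphism $\bar F : N \to E_Z$; a prescribed holonomic $2$-jet extension is likewise determined by its free ``normal-normal'' part, a symmetric bilinear bundle morphism $G : N \odot N \to E_Z$.

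On $T_\delta$, define
\[ \tilde f(\exp_q(v)) := \tau_q^v \bigl( \bar F_q(v) + \tfrac{1}{2} G_q(v, v) \bigr), \]
and globalise with a cutoff $\chi : M \to [0,1]$ satisfying $\chi \equiv 1$ on $T_{\delta/2}$ and $\supp(\chi) \subset T_\delta$, setting $f := \chi \tilde f$. I would verify in geodesic normal coordinates at a point $q \in Z$, trivialising $E$ by radial parallel transport, that $\tilde f|_Z = 0$; that differentiating in a normal direction $v \in N_q$ at $v = 0$ recovers $\bar F_q(v) = F(v)$ (the derivative of $\tau$ and the quadratic term contribute nothing at first order); and that differentiating in a tangential direction $w \in T_q Z$ yields zero, matching $F(w) = 0$, since $w$ is tangent to a curve in $Z$ along which $\tilde f \equiv 0$. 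A direct Taylor expansion in the same coordinates then confirms that the $2$-jet of $\tilde f$ along $Z$ agrees with the prescribed extension; as $\chi \equiv 1$ near $Z$, all these jet conditions also hold for $f$, while $\supp(f) \subset U$ is automatic.

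For size control, the bound $|\bar F_q(v) + \tfrac{1}{2}G_q(v,v)| \le C_1\|F\|\delta + C_1' \|G\|\delta^2$ on $T_\delta$ yields $\|f\|_{C^0(M)} \le C(\|F\|\delta + \|G\|\delta^2)$, arbitrarily small as $\delta \to 0$. The $C^1$-bound follows from the product rule $df = (d\chi)\tilde f + \chi\, d\tilde f$: the second term is controlled by a constant times $\|F\|$ (plus a $\|G\|$-contribution) because $d\tilde f|_Z = F$ and $d\tilde f$ is continuous, while the first satisfies $\|(d\chi)\tilde f\|_{C^0} \le (C_3/\delta)(C_1\|F\|\delta) = C_4 \|F\|$. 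The main delicate point is precisely this cancellation of $\delta$'s: the cutoff's derivative is of order $1/\delta$, so naively $df$ could blow up as $\delta \to 0$, and it is the vanishing of $\tilde f$ on $Z$ (hence $|\tilde f| = O(\delta)$ on $T_\delta$) that rescues both estimates simultaneously, yielding a $C^1$-norm proportional to $\|F\|$ \emph{uniformly in} $\delta$. The other subtle point, which necessitates the factorisation through $N$, is that the tangential component of $d\tilde f|_Z$ must vanish in order to match $F|_{TZ} = 0$; this is automatic from $\tilde f|_Z \equiv 0$, but it genuinely uses the holonomicity assumption to ensure $\bar F$ is well-defined.
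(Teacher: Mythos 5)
Your proposal is correct, and it follows a genuinely different route from the paper's. The paper's argument proceeds in three steps: a local extension via Whitney's classical extension theorem (verifying the Whitney compatibility conditions \eqref{eq:Whitney} by hand for the $1$- and $2$-jet data after padding them with the trivial higher-order jets $f_\alpha = \partial_{\alpha_{\le k}} f_{\alpha_{>k}}$), a global gluing by a partition of unity (which works because all local extensions share the same $2$-jet along $Z$), and then a separate cutoff step to obtain the $C^0$/$C^1$ control. You instead build a single, global, explicit extension $\tilde f(\exp_q(v)) = \tau_q^v(\bar F_q(v) + \tfrac12 G_q(v,v))$ on a tubular neighborhood via exponential map and parallel transport, and only then cut off; the jet conditions are then read directly from the formula in the Fermi-type coordinates $(z,v)$ with the radial trivialization of $E$, where $\hat f'(z,v) = \bar F_z(v) + \tfrac12 G_z(v,v)$ and the holonomic tangent--tangent ($=0$) and tangent--normal ($= \partial_z \bar F$) second derivatives are automatic. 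This is more elementary (no appeal to Whitney's theorem or gluing), and it makes the vanishing-on-$Z$ and $C^0$/$C^1$ estimates more transparent, the cancellation of the $1/\delta$ from $d\chi$ against the $O(\delta)$ bound on $\tilde f$ being exactly the mechanism the paper uses in its own Step 3. Two small caveats on the write-up rather than the idea: the identification of the ``free normal-normal part'' of a holonomic $2$-jet with a tensor $G : N \odot N \to E_Z$ is only canonical once the metric splitting $T_Z M = TZ \oplus N$ and the connection on $E$ are fixed, which you do implicitly and should say; and the phrase ``$C^1$-norm proportional to $\|F\|$ uniformly in $\delta$'' slightly overstates the case -- the $\|G\|$-contributions produce terms of size $O(\|G\|\delta)$ in both $(d\chi)\tilde f$ and $\chi\,d\tilde f$, so one still needs $\delta$ small (depending on $\|G\|$) to absorb them, exactly as in the paper's condition $R < K/C$.
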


\begin{proof}[Proof of Lemma \ref{lemma:Whitney_diff} assuming Lemma \ref{lem:Whitney_sec}.] 
Fix a Riemannian metric $g$ on $M$. Consider the diagonal $\Delta := \{ (x,x) \in M \times M \, | \, x \in M \}$ and $\Delta_Z := \Delta \cap (Z \times Z)$. It is well-known that its tangent bundle $T\Delta$ and its normal bundle $\nu_{\Delta} : E \to M$ in $M \times M$ are both isomorphic to $TM$ (in a canonically way for $T\Delta$ and in standard way for $\nu_{\Delta}$ using the metric $g$). Moreover, the Riemannian exponential map $exp : E \to M \times M$ determines a diffeomorphism between some sufficiently small tubular neighborhoods of the zero section of $\nu_{\Delta}$ in $E$ and of the diagonal $\Delta$ in $M \times M$. Through $exp$, sufficiently $C^1$-small smooth sections of $\nu_{\Delta}$ exactly correspond to the (graphs of the) diffeomorphisms of $M$ that are sufficiently $C^1$-close to the identity.

We now consider the bundle automorphism $\Psi$ from Lemma \ref{lemma:Whitney_diff}. The graph of $\Psi$ is the bundle monomorphism
\[ \gr \, \Psi : T_Z M \to T_Z M \oplus T_Z M : v \in T_p M \mapsto (v, \Psi(v)) \in T_p M \oplus T_pM \, . \]
By the previous bundle isomorphisms and in view of the canonical isomorphism $T_{\Delta_Z}(M \times M) \cong T_{\Delta_Z}M \oplus T_{\Delta_Z}M \cong T_Z M \oplus T_Z M$, we can interpret $ \gr \, \Psi$ as the bundle map
\[  \gr \, \Psi : T_{\Delta_Z} \Delta \to T_{\Delta_Z}(M \times M) : (v,v) \in T_{(p,p)}\Delta \mapsto (v, \Psi(v)) \in T_{(p,p)}(M \times M) \, .  \]
Postcomposing this map with the projection $T_{\Delta_Z}(M \times M) \to (\nu_{\Delta})_{\Delta_Z}$, we see that $\Psi$ determines a bundle morphism $F : T_{\Delta_Z}\Delta \to (\nu_{\Delta})_{\Delta_Z}$ which vanishes on $T(\Delta_Z) \subset  T_{\Delta_Z}\Delta$ and which is sufficiently $C^1$-small. Lemma \ref{lem:Whitney_sec} then implies the existence of a ‘proportionally’ $C^1$-small section $f : \Delta \to E$ which vanishes on $\Delta_Z$ and whose differential along $\Delta_Z$ is $F$. For sufficiently $C^1$-small $\Psi$, this section $f$ is sufficiently $C^1$-small to correspond -- under $\exp$ -- to the sought-after diffeomorphism $\psi$ of $M$. Clearly, if $f$ is $C^0$-small (respectively, supported in $U$), then $\psi$ is $C^0$-close to the identity (respectively, supported in $U$).

Finally, it is clear that there is a correspondence between holonomic $2$-jet extensions of $\Psi$ and holonomic $2$-jet extensions of $F$, and similarly a correspondence between $2$-jets of $\psi$ and $2$-jets of $f$ along $Z$.
\end{proof}

We now turn to the proof of Lemma \ref{lem:Whitney_sec}.  The general strategy is to use Whitney's original extension theorem \cite{W} to prove a local version of the Lemma and then to use a partition of unity argument to deduce the global version. For convenience, we recall here the statement of Whitney's theorem (essentially as given in \cite[Section 1.5.6]{N}, referring to \cite{W} or to \cite[Chapter 1.4]{M} for its proof):

\begin{theorem}[Whitney's extension theorem for $C^{\infty}$-functions]\label{thm:Whitney}
Let $\Omega$ be an open set in $\R^n$ and $X$ be a closed subset of  $\Omega$. Suppose that for each $n$-tuple $\alpha = (\alpha_1, \dots, \alpha_n)$ of non-negative integers, there is given a continuous function $f_{\alpha}$ on $X$. Then the following statements are equivalent:
\begin{enumerate}
\item There exists $f \in C^{\infty}(\Omega)$ for which $\left. D^{\alpha}f \right| X = f_{\alpha}$ for all $\alpha$;

\item For any $\alpha$, any integer $m \ge 0$ and any compact set $K \subset X$, it holds that
\begin{flalign}\label{eq:Whitney}
f_{\alpha}(x) = \sum_{|\beta| \le m} \dfrac{1}{\beta !} f_{\alpha + \beta}(y) (x-y)^{\beta} \, + \, o(|x-y|^m)  
\end{flalign}
uniformly as $|x-y| \to 0$ with $x,y \in K$.
\end{enumerate}
\end{theorem}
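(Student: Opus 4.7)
The direction (1) $\Rightarrow$ (2) is immediate from Taylor's theorem with integral remainder applied to each $D^{\alpha}f$ on a compact neighborhood of $K$: uniform continuity of the derivatives $D^{\alpha+\beta}f$ on such a neighborhood converts the standard Taylor remainder into the $o(|x-y|^m)$ estimate required by \eqref{eq:Whitney}.

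For the substantive direction (2) $\Rightarrow$ (1), my plan is the classical Whitney patching construction. First, I would decompose the open set $\Omega \setminus X$ into a locally finite family of closed dyadic cubes $\{Q_i\}_{i \in I}$ with pairwise disjoint interiors and diameters $d_i := \operatorname{diam}(Q_i)$ comparable to $\dist(Q_i, X)$ (the standard Whitney decomposition of the complement of a closed set). For each $i$, fix a point $p_i \in X$ with $\dist(p_i, Q_i) \le C d_i$, and construct a smooth partition of unity $\{\varphi_i\}$ on $\Omega \setminus X$ subordinate to slightly enlarged cubes $Q_i^{*}$, satisfying the standard bound $|D^{\gamma}\varphi_i| \leq C_{\gamma}\, d_i^{-|\gamma|}$ uniformly in $i$.

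With these pieces in place, I would define the extension by
\begin{equation*}
f(x) \,:=\, \begin{cases} f_0(x), & x \in X, \\ \displaystyle\sum_{i} \varphi_i(x)\, T_{p_i}^{m_i}(x), & x \in \Omega \setminus X, \end{cases}
\end{equation*}
where $T_p^m(z) := \sum_{|\beta| \leq m} \tfrac{1}{\beta!}\, f_\beta(p)\,(z-p)^\beta$ is the formal Taylor polynomial of degree $m$ at $p$, and the integers $m_i$ tend to infinity as $d_i \to 0$. Smoothness of $f$ on $\Omega \setminus X$ is automatic because the defining sum is locally finite. The main obstacle is proving that, for every multi-index $\alpha$, the derivative $D^\alpha f$ extends continuously across $X$ with value $f_\alpha$ --- the absence of a fixed finite smoothness class prevents a single $C^k$ argument from sufficing.

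To handle this obstacle, fix $y \in X$, a multi-index $\alpha$, and an integer $m \geq |\alpha|$. For $x \in \Omega \setminus X$ near $y$, only cubes with $d_i$ of order $|x-y|$ contribute, so using $\sum_i \varphi_i \equiv 1$ I would write
\begin{equation*}
D^\alpha f(x) \,=\, D^\alpha T_y^{m}(x) \,+\, \sum_i D^\alpha\bigl(\varphi_i(x)\,[T_{p_i}^{m_i}(x) - T_y^{m}(x)]\bigr).
\end{equation*}
The first term tends to $f_\alpha(y)$ by construction of $T_y^m$; the second is shown to be $o(1)$ by applying hypothesis~(2) to the pairs $(p_i, y)$ lying in a fixed compact $K \subset X$ --- this yields $|T_{p_i}^{m_i}(x) - T_y^{m}(x)| = o(|x-y|^{m})$ uniformly in $i$, which combined with the Leibniz rule, the bound $|D^\gamma \varphi_i| \leq C_\gamma d_i^{-|\gamma|}$, and $d_i \sim |x-y|$ produces the required estimate. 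The delicate calibration is to choose $m_i \to \infty$ fast enough to control arbitrarily high-order derivatives of $f$ at $y$, yet slowly enough that (2) can be invoked uniformly for each fixed $\alpha$; a clean way to sidestep this balancing act is to apply the classical $C^k$ version of the theorem separately for each $k$ and then patch the resulting $C^k$ extensions using cut-offs supported in a sequence of successively thinner neighborhoods of $X$.
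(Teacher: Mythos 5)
The paper does not prove Theorem~\ref{thm:Whitney}: it states it as a classical result, borrowing the formulation from Narasimhan \cite[Section 1.5.6]{N} and pointing to Whitney \cite{W} and Malgrange \cite[Chapter 1.4]{M} for the proof. So there is no ``paper's proof'' to compare against; the proper benchmark is the classical argument in those references, which is exactly what you sketch. Your $(1)\Rightarrow(2)$ via Taylor's theorem with integral remainder and uniform continuity on a compact neighborhood of $K$ is correct. Your $(2)\Rightarrow(1)$ outline --- Whitney decomposition of $\Omega\setminus X$ into cubes with $\operatorname{diam}(Q_i)\sim\dist(Q_i,X)$, a subordinate partition of unity with scaled derivative bounds, and patching together Taylor polynomials $T_{p_i}^{m_i}$ of growing degree --- is the standard construction and is essentially what the cited sources do.

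Two remarks on your sketch. First, to conclude $D^\alpha f(x)\to f_\alpha(y)$ you cannot stop at the $C^0$ estimate $|T_{p_i}^{m_i}(x)-T_y^m(x)|=o(|x-y|^m)$; after the Leibniz expansion you need the full family of derivative bounds $|D^\beta(T_{p_i}^{m_i}-T_y^m)(x)|=o(|x-y|^{\,m-|\beta|})$ for all $|\beta|\le|\alpha|$ (each again a consequence of hypothesis~(2) applied with $\alpha$ replaced by $\alpha+\beta$ and the fact that $|x-p_i|\sim|x-y|$), so that the gain $o(|x-y|^{\,m-|\beta|})$ beats the blow-up $|D^\gamma\varphi_i|\lesssim|x-y|^{-|\gamma|}$. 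Second, the ``clean sidestep'' at the end --- applying the $C^k$ version separately for each $k$ and patching across a shrinking chain of neighborhoods --- is a legitimate strategy, but it is not actually cleaner: one must still calibrate the neighborhood thicknesses $\delta_k$ against the order of vanishing of the jet difference $h_{k+1}-h_k$ along $X$ so that the derivatives of the cutoffs (which scale like $\delta_k^{-|\gamma|}$) are absorbed; this is the same delicacy in a different guise. None of this is a gap in the paper, which legitimately treats the theorem as a black-box citation.
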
\vspace{12pt}

\begin{proof}[Proof of Lemma \ref{lem:Whitney_sec}.]~\\

\emph{Step 1 - Local existence.} Let $p \in Z$ and consider a small open chart $p \in V \subset M$ centered at $p$ such that  $\nu$ trivializes over $V$. In this way, we can assume that $Z$ is a (relatively) closed embedded submanifold in an open ball $M = V$ of $\R^n$, that $E = V \times \R^q$ and that $F : Z \times \R^n \to Z \times \R^q$, $F(z,v) = (z, F^{(1)}(z,v))$ where $F^{(1)}$ is a $z$-wise $v$-linear map thought of as a formal first-order differential. Let's only consider the more difficult case when a $2$-jet extension of $F$ is also given, \emph{i.e.}, a bundle map $\widetilde{F} : Z \times \R^n \times (\R^n \times \R^n) \to Z \times \R^q$, 
$$\widetilde{F}(z, v, w^{(1)}, w^{(2)}) = (z, F^{(1)}(z,v) + F^{(2)}(z, w^{(1)}, w^{(2)}))\, , $$
where $F^{(2)}$ is a $z$-wise $w$-bilinear map thought of as a formal Hessian operator. Working with each component of $\R^q$ separately, we may assume $q = 1$.

In fact, we can further assume that $Z$ is given in the local coordinates $x_1, \dots, x_n$ by the equations $x_{k+1} = \dots = x_n = 0$. Since each of the $\R^n$ factors intervening in the domains of $F$ and $\widetilde{F}$ are to be interpreted as the tangent space of $M$ at a point of $Z$, we parametrize them with the coordinates $v_1, \dots, v_n$, $w_1^{(1)}, \dots, w_n^{(1)}$ and $w_1^{(2)}, \dots, w_n^{(2)}$ respectively, all thought of as being `the same as' the coordinates $x_1, \dots, x_n$. For this reason, for $p \in Z$, $T_p Z$ is given in each of the three $\R^n$ by setting the last $n-k$ coordinates equal to $0$, e.g., $T_pZ$ is given in the first $\R^n$ factor by the equations $v_{k+1} = \dots = v_n = 0$.

By the assumptions on $F$, we have that $F^{(1)}(x,v) = \sum_{j=k+1}^n f_j(x)v_j$ for some smooth functions $f_j : Z \to \R$. It is convenient to set $f_j = 0$ for $1 \le j \le k$. Similarly, we have $ F^{(2)}(x, w^{(1)}, w^{(2)}) = \sum_{r,s = 1}^n f_{rs}(x) w_r^{(1)} w_s^{(2)}$ for some smooth functions $f_{rs} : Z \to \R$. Moreover, since $\widetilde{F}$ is a holonomic $2$-jet extension of $F$, we have:
\begin{enumerate}[(i)]
    \item $f_{rs} = f_{sr}$ for all $1 \le r, s \le n$;

    \item $f_{rs}(x) = (\partial_{x_r} f_s)(x)$ for all $1 \le r \le k$ and $1 \le s \le n$.
\end{enumerate}
\noindent It follows in particular that $f_{rs} = 0$ for $r, s \le k$.

Now, for each multi-index $\alpha = (\alpha_1, \dots, \alpha_n)$, we select a function $f_{\alpha} : Z \to \R$ as follows (we use the notations $|\alpha| = \sum_{i=1}^n \alpha_i$, $\alpha_{\le k} = (\alpha_1, \dots, \alpha_k, 0, \dots, 0)$ and $\alpha_{> k} = (0, \dots, 0, \alpha_{k+1}, \dots, \alpha_n)$):
\begin{enumerate}
\item For $\alpha = (0)$, $f_{(0)} = 0$.
\item When $|\alpha| = 1$: for $\alpha_{(j)} = (\alpha_{(j)i})_{1 \le i \le n}$ where $\alpha_{(j)i} := \delta_{ji}$ (the Kronecker delta) and $1 \le j \le n$, we set $f_{\alpha_{(j)}}(x):= f_j(x)$.
\item When $|\alpha| = 2$: for $\alpha_{(rs)} := \alpha_{(r)} + \alpha_{(s)}$ where $1 \le r,s \le n$, we set $f_{\alpha_{(rs)}}(x) :=  f_{rs}(x) = f_{sr}(x)$.
\item When $|\alpha| \ge 3$: if $|\alpha|_{\le k} = 0$, we may pick any function for $f_{\alpha}$, say $f_{\alpha} = 0$; otherwise, we set $f_{\alpha} = \partial_{\alpha_{\le k}} f_{\alpha_{> k}}$.
\end{enumerate}
\noindent We observe that the identity $f_{\alpha} = \partial_{\alpha_{\le k}} f_{\alpha_{> k}}$ holds in fact for every $\alpha$, and that $f_{\alpha} = 0$ identically on $Z$ whenever $\alpha_{> k} = (0)$. 

We claim these choices satisfy Condition \ref{eq:Whitney}. Indeed, since we need to consider points $x,y \in K \subset Z$, the product $(x-y)^{\beta}$ vanishes whenever $\beta_{>k} \neq (0)$. Hence, for any $\alpha$, any integer $m \ge 0$ and any compact set $K \subset Z$, it holds that
\begin{flalign*}\label{eq:Whitney}
f_{\alpha}(x) &= (\partial_{\alpha_{\le k}} f_{\alpha_{>k}})(x) \\
&= \sum_{|\beta| \le m, \, \beta_{>k} = (0)} \dfrac{1}{\beta !} (\partial_{\alpha_{\le k} + \beta} f_{\alpha_{>k}})(y) (x-y)^{\beta} \, + \, o(|x-y|^m)  \\
&= \sum_{|\beta| \le m, \, \beta_{>k} = (0)} \dfrac{1}{\beta !} f_{\alpha + \beta}(y) (x-y)^{\beta} \, + \, o(|x-y|^m) \\
&= \sum_{|\beta| \le m} \dfrac{1}{\beta !} f_{\alpha + \beta}(y) (x-y)^{\beta} \, + \, o(|x-y|^m)
\end{flalign*}
uniformly as $|x-y| \to 0$ with $x,y \in K$, where the second equality follows from Taylor's theorem and the last equality follows since the product $(x-y)^{\beta}$ vanishes whenever $\beta_{>k} \neq (0)$ and $x,y \in K \subset Z$.

Hence Condition \ref{eq:Whitney} is fully established. Theorem \ref{thm:Whitney} thus implies the existence of a smooth function $f : V \to \R$ that vanishes on $Z$ and whose differential along $Z$ equals $F$. \\

\emph{Step 2 - Global existence}. For each $p \in Z$, consider a small open set $V_p$ centered at $p$ as in \emph{Step 1} and denote $f_p : V_p \to E$ the corresponding local solution. Since $Z$ is compact, there is a thin compact neighborhood $Z' \supset Z$ covered by sets $V_1 := V_{p_1}, \dots, V_m := V_{p_m}$, and the collection $\mathcal{V} := (V_0 := M \setminus Z', V_1, \dots, V_m)$ is a finite open cover of $M$. Let $(\chi_0, \chi_1, \dots, \chi_m)$ be a smooth partition of unity subordinated to $\mathcal{V}$; we note that $\sum_{j=1}^m \chi_j(x) = 1$ on $Z'$.

Define the smooth section $f : M \to E$ by $f(x) := \sum_{j=1}^m \chi_j(x) f_j(x)$. It clearly vanishes on $Z$. By \emph{Step 1}, at each point $p \in Z$, the $2$-jets of the functions $f_j$ defined at this point are all equal. Hence, for $p \in Z$, we compute in any chart containing $p$:
\[ df_p = \sum_{j=1}^m [d(\chi_j)_p \, f_j(p) + \chi_j(p) \, d(f_j)_p] =  \sum_{j=1}^m [0 + \chi_j(p) \, F_p] = F_p \, ,  \]
and similarly
\begin{flalign*}
d^2f_p &= \sum_{j=1}^m [d^2(\chi_j)_p \, f_j(p) + 2 d(\chi_j)_p \, d(f_j)_p + \chi_j(p) \, d^2(f_j)_p] \\
&=  \sum_{j=1}^m [d^2(\chi_j)_p \, 0 + 2 d(\chi_j)_p \, F_p + \chi_j(p) \, F^{(2)}_p] = F^{(2)}_p \, .
\end{flalign*}
This proves the existence of a section $f$ extending the given holonomic $2$-jet data along $Z$.\\

\emph{Step 3 - $C^0$ and $C^1$ control}. Replacing $f$ by its multiplication with a bump function that equals $1$ in a neighborhood of $Z$ and that is supported in a thin tubular neighborhood of $Z$, we may ensure that $f$ is supported in any neighborhood $U \supset Z$. Furthermore, since $f$ vanishes on $Z$ and $Z$ is compact, by taking $U$ thin enough, $f$ can be made arbitrarily $C^0$-small.

To prove the last claim about the $C^1$-smallness of the extension, fix some metrics on $M$ and $E$ and consider the corresponding metric connection $\nabla$. Define
\[ K :=  \underset{p \in Z}{\max} \, \underset{v \in T_p M \, , \, \|v \| = 1 }{\max} \, \| F(p, v) \| \, . \]
For $p \in M$, let $r(p) := \dist(p, Z)$ denote the geodesic distance from $p$ to $Z$.

Fix an extension $f$ of $F$ as given above and consider the tubular neighborhood $U$ of $Z$ of geodesic radius $R$. Since $M$ is compact and $f$ is smooth, there exists $C > 0$ such that $\| (\nabla_v f)(p) \| \le K + C r(p)$ for all $p \in M$ and for all $v \in T_p M$ of norm $1$. Hence, by taking $R$ small enough and since $f$ vanishes on $Z$, we obtain that $\|f(p)\| \le 2K r(p)$ for all $p \in U$. For later convenience, we shall also assume that $R < K/C$.

Let $\rho : [0, + \infty) \to [0,1]$ be a smooth non-increasing function supported in $[0, R]$ such that $\rho = 1$ near $0$ and which is approximately affine on $[0,R]$, so that $|\rho'(x)| < 2/R$. Then the function $\tilde{f}(p) := \rho(r(p)) f(p)$ still extends $F$ as in the Lemma and is supported in $U$. Moreover, given $p \in U$ and $v \in T_p M$ of norm $1$, we estimate 
\begin{flalign*}
 \| (\nabla \tilde{f}_p)(v) \| &\le | (d\rho)_p(v)| \,\| f(p) \| + |\rho(p)| \, \| (\nabla_v f)(p) \| \\
\notag & \le (2/R) \, 2K r(p) +  (K + C r(p))  \le  5 K + CR \le 6K \, .
\end{flalign*}
This proves that if $F$ is close to the zero morphism, \emph{i.e} if $K$ is small, then there exists an extension $\tilde{f}$ of $F$ which is ‘proportionally’ $C^1$-small over the whole of $M$.
\end{proof}

\begin{remark}\label{rem:explanations_Whitney}
Proposition~\ref{prop:Whitney_diff_2} is a quantitative formulation of Lemma~\ref{lemma:Whitney_diff} in terms of the $C^0$ and $C^1$ norms on diffeomorphisms and jets that the specific setup of Section~\ref{section:prep_lemma} allows to define. Only the meaning of the constants $\epsilon_0 > 0$ and $\kappa \ge 1$ deserve further explanation.

Lemma~\ref{lemma:Whitney_diff} applies to any $1$-jet $\Phi$ along $Z$ which is sufficiently close to the identity bundle morphism. Since $M$ is compact, this applies whenever the (finitely many) values $\| \Phi - Id\|_{C^0(Z \cap U_j)}$ are all smaller than some appropriate constant $\epsilon_0 > 0$. (We recall that the need for considering $1$-jets that are sufficiently close to the identity morphism stems from the way we reduce Lemma~\ref{lemma:Whitney_diff} to Lemma~\ref{lem:Whitney_sec}.)

Next, take $\epsilon_1$ ($< \epsilon_0$) to be the maximum of the norms $\| \Phi - Id\|_{C^0(Z \cap U_j)}$. Lemma~\ref{lemma:Whitney_diff} implies that $\Phi$ extends to a diffeomorphism $\phi$ over $M$ which is "proportionally $C^1$-close" to the identity diffeomorphism. In terms of norms, this means that there exists a proportionality constant $\kappa \ge 1$ (independent of $\Phi$ and $\phi$) such that the (finitely many) values $\| d\phi - Id \|_{C^0(\overline{V}_j)}$ are all smaller than $\kappa \epsilon_1$. The existence of this constant $\kappa$ ultimately follows from our proof of Lemma~\ref{lem:Whitney_sec}: Concretely, bearing in mind the transition from diffeomorphisms to sections between Lemmata~\ref{lemma:Whitney_diff} and ~\ref{lem:Whitney_sec} and the changes between equivalent norms when going from the setup of Proposition~\ref{prop:Whitney_diff_2} to that of Step $3$ in our proof of Lemma~\ref{lem:Whitney_sec}, the parameters $\epsilon_1$ and $\kappa$ in Proposition~\ref{prop:Whitney_diff_2} correspond respectively to the parameter $K$ and to the proportionality constant $6$ derived in Step $3$ in our proof of Lemma~\ref{lem:Whitney_sec}.
\eoe
\end{remark}

\section*{Declarations}

\subsection*{Conflict of interest} All authors declare that they have no conflicts of interest.
\subsection*{Availability of data} No datasets were generated or analyzed for the current work.
\subsection*{Authors' contributions} The authors contributed equally to this work.



\begin{thebibliography}{99}
\bibitem{ALLP} S. Anjos, J. Li, T.-J. Li, and M. Pinsonnault, \emph{ Stability of the symplectomorphism groups of rational surfaces. Math. Ann. (2023)}. https://doi.org/10.1007/s00208-023-02643-5

\bibitem{Buse-Neg-inflation} O. Bu\c{s}e, Negative inflation and stability in symplectomorphism groups of ruled surfaces, J. Symplectic Geom. \textbf{9} (2011), no.~2, 147--160. MR2811650.

\bibitem{Do2006}
Donaldson, S. K., \emph{Two-forms on four-manifolds and elliptic equations}, in \emph{Inspired by S. S. Chern}, 153–172. Nankai Tracts Math., 11, World Scientific Publishing Co., Hackensack, NJ, 2006.

\bibitem{H} 
\textsc{M. W. Hirsch.} \emph{Differential topology}. Graduate Texts in Mathematics, No. 33. Springer-Verlag, New York-Heidelberg, 1976. x+221 pp.

\bibitem{La-Isotopy}
Lalonde, François. 
\emph{Isotopy of symplectic balls, Gromov's radius and the structure of ruled symplectic $4$-manifolds}, Math. Ann. 300 (1994), no.2, 273–296.

\bibitem{TJL-Zhang} T.-J. Li\ and\ W. Zhang, Almost K\"{a}hler forms on rational 4-manifolds, Amer. J. Math. \textbf{137} (2015), no.~5, 1209--1256. MR3405867, \href{https://arxiv.org/abs/1210.2377v2}{arxiv.org:1210.2377v2}

\bibitem{LZ09}
T.-J. Li and W. Zhang.
\newblock  {\em Comparing tame and compatible symplectic cones and cohomological properties of almost complex manifolds.}
\newblock {Comm. Anal. Geom.}, {17 (2009), no. 4}, {651--683}.

\bibitem{M}  \textsc{B. Malgrange}.  \emph{Ideals of differentiable functions}. Tata Institute of Fundamental Research Studies in Mathematics, 3. Tata Institute of Fundamental Research, Bombay; Oxford University Press, London, 1967. vii+106~pp.

\bibitem{MS} \textsc{D. McDuff and D. Salamon}. \emph{Introduction to symplectic topology}. Third edition. Oxford Graduate Texts in Mathematics. Oxford University Press, Oxford, 2017. xi+623 pp.

\bibitem{McDuff-DeformationToIsotopy}
D. McDuff, 
From symplectic deformation to isotopy.Topics in symplectic $4$-manifolds (Irvine, CA, 1996), 85–99.
First Int. Press Lect. Ser., I
International Press, Cambridge, MA, 1998

\bibitem{McDuff-J-inflation} D. McDuff, Symplectomorphism groups and almost complex structures, in \textit{Essays on geometry and related topics, Vol. 1, 2}, 527--556, Monogr. Enseign. Math., \textbf{38}, Enseignement Math., Geneva. MR1929338, \href{https://arxiv.org/abs/math/0010274}{arXiv:math/0010274}

\bibitem{McDuff-Rational+Ruled}
D. McDuff,
The structure of rational and ruled symplectic $4$-manifolds.
J. Amer. Math. Soc.3(1990), no.3, 679–712.

\bibitem{N} \textsc{R. Narasimhan}. \emph{Analysis on real and complex manifolds}. Reprint of the 1973 edition. North-Holland Mathematical Library, 35. North-Holland Publishing Co., Amsterdam, 1985. xiv+246 pp.

\bibitem{P08i}
M. Pinsonnault, 
\newblock{\em Symplectomorphism groups and embeddings of balls into rational ruled 4-manifolds.} \newblock{Compos. Math.}, {144 (2008)}, {no. 3}, {787--810}.

\bibitem{TQWZZ}
Q. Tan, H. Wang, J. Zhou, P. Zhu,
On tamed almost complex four-manifolds.
Peking Math. J.5 (2022), no.1, 37–152. 
\href{https://arxiv.org/abs/1712.02948v2}{arXiv:1712.02948v2}

\bibitem{Taubes} 
C. H. Taubes, Tamed to compatible: symplectic forms via moduli space integration, J. Symplectic Geom. \textbf{9} (2011), no.~2, 161--250. MR2811651, \href{https://arxiv.org/abs/0910.5440v2}{arxiv.org:0910.5440v2}

\bibitem{W} \textsc{H. Whitney}. Analytic extensions of differentiable functions defined in closed sets. \textit{Trans. Amer. Math. Soc.} \textbf{36} (1934), no. 1, 63–89.

\end{thebibliography}
\end{document}